\newtheorem{theorem}{Theorem}[section]
\newtheorem{lemma}[theorem]{Lemma}
\theoremstyle{definition}
\newtheorem{remark}[theorem]{Remark}
\theoremstyle{proposition}
\newtheorem{proposition}[theorem]{Proposition}
\numberwithin{equation}{section}
\def\eq#1{(\ref{#1})}
\def\R{\mathbb{R}}
\begin{document}

\title{Weak Solutions for Singular Quasilinear Elliptic Systems}

\author{{\large Gurpreet Singh}\\School of Mathematics and Statistics \\
 University College Dublin,\\
 Belfield, Dublin 4, Ireland\\
E-mail: {\tt gurpreet.singh@ucdconnect.ie}}
\date{12 February, 2016}

\maketitle

\begin{abstract}
We investigate the quasilinear elliptic system
\begin{equation}\label{sysS}
\left\{
\begin{aligned}
-\Delta_{m} u&=u^{-p}v^{-q},u>0 &&\quad\mbox{ in } \Omega,\\
-\Delta_{m} v&=u^{r}v^{-s}, v>0 &&\quad\mbox{ in } \Omega,\\
u&=v=0 &&\quad\mbox{ on }\partial{\Omega},
\end{aligned}
\right.
\end{equation}
where $\Omega \subset \R^{N}(N\geq 1)$ is a bounded and smooth domain, $1<m<\infty$, $p, q, r, s>0$. Under certain conditions imposed on the exponents we obtain the existence and uniqueness of a weak solution $(u, v)$ with $u$, $v \in W_{0}^{1, m}(\Omega)\cap C(\Omega)$. We also investigate the $ W_{0}^{1, \tau}(\Omega)$ regularity of solution and determine the optimal range of $\tau \geq m$ for such regularity.
\end{abstract}

\bigskip

\noindent{\bf Keywords:} Singular quasilinear elliptic systems, $m$-Laplace operator, weak solution, singular nonlinearity, regularity in Sobolev space.

\medskip

\noindent{\bf MSC:} 35J92, 35J75, 35J47.


\section{Introduction and  Main Results}
In this paper we are concerned with the system
\begin{equation}\label{sys}
\left\{
\begin{aligned}
-\Delta_{m} u&=u^{-p}v^{-q},u>0 &&\quad\mbox{ in } \Omega,\\
-\Delta_{m} v&=u^{r}v^{-s}, v>0 &&\quad\mbox{ in } \Omega,\\
u&=v=0 &&\quad\mbox{ on }\partial{\Omega},
\end{aligned}
\right.
\end{equation}
where $\Omega \subset \R^{N}(N\geq 1)$ is a bounded and smooth domain, $1<m<\infty$ and $p, q, r, s>0$. Here, $\Delta_{m}$ is the $m$-Laplace operator defined as
$$
\Delta_{m} u= {\rm div}(|\nabla u|^{m-2}\nabla u).
$$
The $m$-Laplace operator is used in mathematical models that arise in chemical reactions, psuedo-plastic flows, population dynamics, astrophysics, morphogenesis and many other applications. We investigate the existence and uniqueness of weak solutions to system \eq{sys}. By a solution of \eq{sys} we understand a pair $(u, v)$ with $u$, $v\in W_{0}^{1, m}(\Omega)\cap C(\Omega)$ that satifies $u$, $v>0$ and
\begin{equation*}
\left\{
\begin{aligned}
\int_{\Omega}|\nabla u|^{m-2}\nabla u \cdot \nabla \phi dx&= \int_{\Omega}u^{-p}v^{-q}\phi dx &&\quad\mbox{ for all }\phi \in C_{c}^{\infty}(\Omega),\\
\int_{\Omega}|\nabla v|^{m-2}\nabla v \cdot \nabla \psi dx&= \int_{\Omega}u^{r}v^{-s}\psi dx &&\quad\mbox{ for all }\psi \in C_{c}^{\infty}(\Omega).
\end{aligned}
\right.
\end{equation*} 
Systems of type \eq{sys} with all the exponents negative have been studied recently by Giacomoni, Schindler and Takac  in \cite{GST2015}. They considered the system
\begin{equation}\label{sys1}
\left\{
\begin{aligned}
-\Delta_{p_1} u&=u^{-a_1}v^{-b_1},u>0 &&\quad\mbox{ in } \Omega,\\
-\Delta_{p_2} v&=u^{-b_2}v^{-a_2}, v>0 &&\quad\mbox{ in } \Omega,\\
u&=v=0 &&\quad\mbox{ on }\partial{\Omega},
\end{aligned}
\right.
\end{equation}
where $\Omega \subset \R^{N}$ is a bounded and smooth domain, $1<p_{1}, p_{2}<\infty$ and $a_{1}, a_{2}, b_{1}, b_{2}>0$ are positive constants. They employed monotonicity methods in order to prove the existence and uniqueness of a positive solution $(u, v)$ to \eq{sys1} and they were able to get the regularity of solution in the H\"{o}lder Space $C^{0, \beta}(\overline \Omega)$ for some $\beta \in (0, 1)$. Systems of type \eq{sys} with all the exponents positive were studied by Clement, Fleckinger, Mitidieri and Thelin in \cite{CFMT2000}. They considered the following system
\begin{equation}\label{psys}
\left\{
\begin{aligned}
-\Delta_{p} u&=u^{a_{1}}v^{b_{1}},u>0 &&\quad\mbox{ in } B_{R},\\
-\Delta_{q} v&=u^{b_2}v^{a_2}, v>0 &&\quad\mbox{ in } B_{R},\\
u&=v=0 &&\quad\mbox{ on }\partial{B_{R}},
\end{aligned}
\right.
\end{equation}
where $p$, $q\geq 1$, $a_{1}, a_{2}, b_{1}, b_{2}\geq 0$ and $B_{R}$ is an open ball in $\R^{N}$. Systems of type \eq{sys} have also been studied in \cite{CT2010, GH2011, GH2013, GST2007}. The singular semilinear case $p_1=p_2=2$ in \eq{sys1} has been studied even more frequently in \cite{BS2014, CM2000, CM2003, G2010, G2011, HMV2008}. For instance, if $m=2, p<0$ and $q, r ,s>0$, system \eq{sys} corresponds to Gierer-Meinhardt system \cite{GM1972} in morphogenesis.

\medskip

The existence of a solution $(u, v)$ to system \eq{sys} is obtained under the following assumption on the exponents $p$, $q$, $r$ and $s$
\begin{equation}\label{eqf}
\frac{qr}{(1+p)(1+s)}<1.
\end{equation}

\medskip

In order to obtain the existence of a solution $(u, v)$ to \eq{sys} we employ the Schauder fixed point theorem in a closed convex subset of $C(\overline{\Omega})\times C(\overline{\Omega})$ which contains all the functions having a certain rate of decay expressed in terms of the distance function up to the boundary of $\Omega$. This will be done by investigating the singular problem
\begin{equation}\label{sys2}
\left\{
\begin{aligned}
-\Delta_{m} u&= K(x)u^{-p}, u>0 &&\quad\mbox{ in }\Omega,\\
u&=0 &&\quad\mbox{ on }\partial{\Omega},
\end{aligned}
\right.
\end{equation}
where $m>1$, $p\geq 0$ and $K: \Omega \rightarrow (0, \infty)$ is a positive function that behaves like $\delta(x)^{-q}$ where $\delta(x)= {\rm dist}(x, \partial{\Omega})$ and $q\geq 0$ satisfies
$$
p\Big(1-\frac{1}{m}\Big)+q<2- \frac{1}{m}.
$$
The existence of a positive weak solution $u$ of \eq{sys2} with
\begin{equation*}
u \in W_{0}^{1, m}(\Omega)\cap C^{1, \alpha}(\overline{\Omega}) \quad\mbox{ if }p+q<1
\end{equation*}
or 
\begin{equation*}
u \in W_{0}^{1, m}(\Omega)\cap C^{1, \beta}(\overline{\Omega}) \quad\mbox{ if }p+q\geq 1
\end{equation*}
for some $\alpha, \beta \in (0, 1)$ has been obtained in \cite{BGH2015, GMS2012, GST2015}. Problems of type \eq{sys2} have been studied in \cite{BBGGPV1995, BO2009, C2013} and the semilinear case $m=2$ has been studied in \cite{CRT1977, GR2003, GRbook,GCLH1993}.

\medskip

In fact, one can say more about the regularity of the solution $u$ of \eq{sys2}. More precisely we obtain in Section 2 of this work that the unique weak solution $u$ of \eq{sys2} belongs to the Sobolev space $W_{0}^{1, \tau}(\Omega)$ where
$$
m\leq \tau< \frac{m+p-1}{p+q-1} \quad\mbox{ if } p+q>1,
$$
and
$$
m\leq \tau< \infty \quad\mbox{ if } p+q=1.
$$
The $W_{0}^{1, \tau}(\Omega)$ regularity we obtained on the above range of $\tau$ is optimal. This result also transfers to the regularity of solution $(u, v)$ of \eq{sys} as we shall see below. 

\medskip

Throughout this paper, for any two functions $f$, $g$ defined on $\Omega$ we shall write $f\sim g$ to denote that
$$
c_{1}\leq \frac{f}{g}\leq c_{2} \quad\mbox{ in }\Omega,
$$
for some positive constants $c_{1}$ and $c_{2}$. 

\medskip

Our first result concerns the existence of a solution to \eq{sys}.

\medskip

\begin{theorem}\label{mr}{\rm ( \bf Existence)}
Assume \eq{eqf}. Let $m>1$ and $p, q, r,s>0$ satisfy
$$
p\Big(1-\frac{1}{m}\Big)+q<2- \frac{1}{m}
$$
and 
\begin{equation*}
s\Big(1-\frac{1}{m}\Big)-r<2- \frac{1}{m}.
\end{equation*}
\begin{itemize}
\item[{\rm (i)}] If $s-r>1$ and $p+\frac{q(m+r)}{m+s-1}<1$, then \eq{sys} has a positive weak solution $(u, v)$ such that $u \in W_{0}^{1, m}(\Omega) \cap C^{1, \alpha}(\overline{\Omega})$ for some $\alpha \in(0, 1)$ and $v \in W_{0}^{1, \tau}(\Omega) \cap C^{0, \beta}(\overline{\Omega})$ for some $\beta \in(0, 1)$ and for all $m \leq \tau \leq \frac{m+s-1}{s-r-1}$. Also
$$
u(x) \sim \delta(x) \quad\mbox{ and }  v(x) \sim \delta(x)^{\frac{m+r}{m+s-1}}.
$$
\item[{\rm (ii)}] If $s-r>1$ and $p+\frac{q(m+r)}{m+s-1}=1$, then \eq{sys} has a positive weak solution $(u, v)$ such that $u \in W_{0}^{1, \tau_{1}}(\Omega) \cap C^{0, \alpha}(\overline{\Omega})$ for some $\alpha \in(0, 1)$ and for all $m \leq \tau_{1}< \infty$ and $v \in W_{0}^{1, \tau_{2}}(\Omega) \cap C^{0, \beta}(\overline{\Omega})$ for some $\beta \in(0, 1)$ and for all $m \leq \tau_{2} \leq \frac{m+s-1}{s-r-1}$. Also
$$
u(x) \sim \delta(x)\log^{\frac{1}{m+p-1}}\Big(\frac{1}{\delta(x)}\Big) \quad\mbox{ and }  v(x) \sim \delta(x)^{\frac{m+r}{m+s-1}}.
$$
\item[{\rm (iii)}] If $s-r<1$ and $p+q<1$, then \eq{sys} has a positive weak solution $(u, v)$ such that $u \in W_{0}^{1, m}(\Omega) \cap C^{1, \alpha}(\overline{\Omega})$ for some $\alpha \in(0, 1)$ and $v \in W_{0}^{1, m}(\Omega) \cap C^{1, \beta}(\overline{\Omega})$ for some $\beta \in(0, 1)$. Also
$$
u(x) \sim \delta(x) \quad\mbox{ and }  v(x) \sim \delta(x).
$$
\item[{\rm (iv)}] If $s-r<1$ and $p+q=1$, then \eq{sys} has a positive weak solution $(u, v)$ such that $u \in W_{0}^{1, \tau}(\Omega) \cap C^{0, \alpha}(\overline{\Omega})$ for some $\alpha \in(0, 1)$ and for all $m\leq \tau< \infty$ and $v \in W_{0}^{1, m}(\Omega) \cap C^{1, \beta}(\overline{\Omega})$ for some $\beta \in(0, 1)$. Also
$$
u(x) \sim \delta(x)\log^{\frac{1}{m+p-1}}\Big(\frac{1}{\delta(x)}\Big) \quad\mbox{ and }  v(x) \sim \delta(x).
$$
\item[{\rm (v)}] If $s-r=1$ and $p+q<1$, then \eq{sys} has a positive weak solution $(u, v)$ such that $u \in W_{0}^{1, m}(\Omega) \cap C^{1, \alpha}(\overline{\Omega})$ for some $\alpha \in(0, 1)$ and $v \in W_{0}^{1, \tau}(\Omega) \cap C^{0, \beta}(\overline{\Omega})$ for some $\beta \in(0, 1)$ and for all $m \leq \tau< \infty$. Also
$$
u(x) \sim \delta(x) \quad\mbox{ and }  v(x) \sim \delta(x)\log^{\frac{1}{m+s-1}}\Big(\frac{1}{\delta(x)}\Big).
$$
\item[{\rm (vi)}] If $s-r=1$ and $p+q=1$, then \eq{sys} has a positive weak solution $(u, v)$ such that $u \in W_{0}^{1, \tau_{1}}(\Omega) \cap C^{0, \alpha}(\overline{\Omega})$ for some $\alpha \in(0, 1)$ and for all $m\leq \tau_{1}< \infty$ and $v \in W_{0}^{1, \tau_{2}}(\Omega) \cap C^{0, \beta}(\overline{\Omega})$ for some $\beta \in(0, 1)$ and for all $m \leq \tau_{2}< \infty$. Also
$$
u(x) \sim \delta(x)\log^{\frac{1}{m+p-1}}\Big(\frac{1}{\delta(x)}\Big) \quad\mbox{ and }  v(x) \sim \delta(x)\log^{\frac{1}{m+s-1}}\Big(\frac{1}{\delta(x)}\Big).
$$
\item[{\rm (vii)}] If $p+q>1$ and $s-\frac{r(m-q)}{m+p-1}<1$, then \eq{sys} has a positive weak solution $(u, v)$ such that $u \in W_{0}^{1, \tau}(\Omega) \cap C^{0, \alpha}(\overline{\Omega})$ for some $\alpha \in(0, 1)$ and for all $m\leq \tau< \frac{m+p-1}{p+q-1}$ and $v \in W_{0}^{1, m}(\Omega) \cap C^{1, \beta}(\overline{\Omega})$ for some $\beta \in(0, 1)$. Also
$$
u(x) \sim \delta(x)^{\frac{m-q}{m+p-1}} \quad\mbox{ and }  v(x) \sim \delta(x).
$$
\item[{\rm (viii)}] If $p+q>1$ and $s-\frac{r(m-q)}{m+p-1}=1$, then \eq{sys} has a positive weak solution $(u, v)$ such that $u \in W_{0}^{1, \tau_{1}}(\Omega) \cap C^{0, \alpha}(\overline{\Omega})$ for some $\alpha \in(0, 1)$ and for all $m\leq \tau_{1}< \frac{m+p-1}{p+q-1}$ and $v \in W_{0}^{1, \tau_{2}}(\Omega) \cap C^{0, \beta}(\overline{\Omega})$ for some $\beta \in(0, 1)$ and for all $m\leq \tau_{2}< \infty$. Also
$$
u(x) \sim \delta(x)^{\frac{m-q}{m+p-1}} \quad\mbox{ and }  v(x) \sim \delta(x)\log^{\frac{1}{m+s-1}}\Big(\frac{1}{\delta(x)}\Big).
$$
\end{itemize}

\end{theorem}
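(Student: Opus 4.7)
The plan is to apply the Schauder fixed point theorem as indicated in the introduction. For each of the eight cases, let $\phi, \psi$ denote the boundary decay profiles stated for $u$ and $v$, respectively (for example, in case (i) we take $\phi(x) = \delta(x)$ and $\psi(x) = \delta(x)^{(m+r)/(m+s-1)}$). Working inside the Banach space $C(\overline{\Omega})^2$, I consider the closed convex set
$$
\mathcal{K} = \bigl\{(u, v) \in C(\overline{\Omega})^2 : c_1 \phi \leq u \leq c_2 \phi,\; c_3 \psi \leq v \leq c_4 \psi \text{ in } \Omega\bigr\},
$$
with constants $0 < c_1 \leq c_2$, $0 < c_3 \leq c_4$ to be fixed, and define $T : \mathcal{K} \to C(\overline{\Omega})^2$ by $T(u, v) = (U, V)$, where $U$ and $V$ are the unique weak solutions of the decoupled singular problems
$$
-\Delta_m U = v^{-q} U^{-p}, \qquad -\Delta_m V = u^{r} V^{-s} \quad \text{ in } \Omega,
$$
with zero Dirichlet data. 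Their existence, uniqueness, $C^{0,\gamma}(\overline{\Omega})$ regularity, and boundary behavior come from the scalar result for \eq{sys2} recalled in Section 2, applied with weights $K(x) = v^{-q}$ and $K(x) = u^{r}$, both comparable to explicit powers of $\delta(x)$ (with possible logarithmic corrections). A fixed point of $T$ in $\mathcal{K}$ is by construction a weak solution of \eq{sys}.

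The first key step is invariance $T(\mathcal{K}) \subset \mathcal{K}$. For $(u, v) \in \mathcal{K}$, the weight $v^{-q}$ is comparable to $\delta^{-q e_\psi}$, where $e_\psi$ denotes the power of $\delta$ appearing in $\psi$, and the numerical conditions written in each case (for instance $p + q(m+r)/(m+s-1) < 1$ in case (i), or the analogous equalities/inequalities elsewhere) are precisely the scalar subcritical/critical/supercritical trichotomy for $p + q e_\psi$ relative to $1$. The decay then returned by the scalar theorem for $U$ reproduces $\phi$, together with the expected logarithmic factors in the critical cases. By the weak comparison principle, combined with sub- and super-solutions built from multiples of $\phi$, one extracts constants $c_1', c_2'$ with $c_1' \phi \leq U \leq c_2' \phi$ depending only on $c_3, c_4$; the symmetric argument for the second equation yields $c_3' \psi \leq V \leq c_4' \psi$ with constants depending only on $c_1, c_2$. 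Choosing $c_1, c_3$ small and $c_2, c_4$ large enough then produces $(U, V) \in \mathcal{K}$.

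Compactness and continuity of $T$ are standard consequences of the scalar theorem. It provides a uniform $C^{0,\gamma}(\overline{\Omega})$ bound on $T(\mathcal{K})$, so $T(\mathcal{K})$ is relatively compact in $C(\overline{\Omega})^2$ by Arzel\`a--Ascoli, and continuity follows from uniqueness of each scalar problem together with stability under uniform convergence of the weights, the $\mathcal{K}$-bounds furnishing the required uniform domination of the singular right-hand sides. Schauder's fixed point theorem then produces $(u, v) \in \mathcal{K}$ solving \eq{sys}, and the Sobolev regularity $W_0^{1,\tau}$ announced in each case follows by applying the $W_0^{1,\tau}$ statement for \eq{sys2} from Section 2 to each equation separately, now that the right-hand sides carry precisely the $\delta$-power behavior dictated by $(\phi, \psi)$. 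The principal obstacle is the invariance step: one has to check, case by case, that the decay returned by the scalar theorem matches the ansatz $(\phi, \psi)$, which is what forces the precise numerical conditions distinguishing the eight cases; the equality cases, where $\phi$ or $\psi$ carries a logarithmic correction of the form $\log^{1/(m+p-1)}(1/\delta)$ or $\log^{1/(m+s-1)}(1/\delta)$, are the most delicate, since those logarithms must be propagated through both the scalar comparison arguments and the passage to the limit in the Schauder application.
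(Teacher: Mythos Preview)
Your overall strategy matches the paper's exactly: Schauder's fixed point theorem in a closed convex subset of $C(\overline\Omega)^2$ bounded by boundary profiles, the map $T$ built from the two decoupled scalar problems, invariance via Proposition~\ref{tm1}, compactness from H\"older bounds, continuity from uniqueness, and $W_0^{1,\tau}$-regularity read off a posteriori. Two points, however, need more care.

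First, the sentence ``choosing $c_1,c_3$ small and $c_2,c_4$ large enough then produces $(U,V)\in\mathcal K$'' hides the only place where hypothesis \eq{eqf} enters. The constants are coupled: the bounds you obtain on $U$ depend on $c_3,c_4$, and those on $V$ depend on $c_1,c_2$. Tracing through yields a system equivalent to $c_1 c_2^{\alpha}\le A$ and $c_2 c_1^{\alpha}\ge B$ with $\alpha=\frac{qr}{(1+p)(1+s)}$, which is solvable precisely because $\alpha<1$. The paper makes this explicit in \eq{mr3}; you should at least indicate why a consistent choice exists.

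Second, and more seriously, your handling of the equality cases (ii), (iv)--(vi), (viii) does not work as stated. If $\mathcal K$ uses the exact logarithmic profile $\phi=\delta\log^{1/(m+p-1)}(1/\delta)$ for $u$, then for $(u,v)\in\mathcal K$ the weight $u^r$ feeding into the $V$-equation carries a factor $\log^{r/(m+p-1)}(1/\delta)$, while Proposition~\ref{tm1} is stated only for pure power weights $K\sim\delta^{-q}$. Even if one extends the scalar result, the returned $V$ picks up its own log correction and no longer lies between fixed multiples of $\psi$, so $T(\mathcal K)\subset\mathcal K$ fails. The paper sidesteps this by \emph{not} using the exact profile in $\mathcal A$: in each critical case it relaxes the bounds to $c_1\delta\le u\le c_2\delta^{1-\varepsilon}$ (and analogously for $v$), so the weights remain pure powers of $\delta$ and Proposition~\ref{tm1} applies directly. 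The precise asymptotics $u\sim\phi$, $v\sim\psi$ are then recovered for the fixed point a posteriori, by applying Proposition~\ref{tm1} once more to each equation of the actual system \eq{sys}.
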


\medskip

In our next result we discuss the uniqueness of a weak solution to \eq{sys}.

\medskip

\begin{theorem}\label{mru}{\rm (\bf Uniqueness)}
Let $m>1$ and assume \eq{eqf}. If one of the following holds:
\begin{itemize}
\item[{\rm (i)}] $s-r>1$ and $p+\frac{q(m+r)}{m+s-1}< 1$;
\item[{\rm (ii)}] $p+q< 1$ and $s-r\leq 1$;
\item[{\rm (iii)}] $p+q= 1$ and $s-r< 1$;
\item[{\rm (iv)}] $p+q>1$ and $s-\frac{r(m-q)}{m+p-1}<1$;
\end{itemize}
Then system \eq{sys} has a unique solution.
\end{theorem}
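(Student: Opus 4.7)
Suppose for contradiction that \eqref{sys} admits two distinct positive weak solutions $(u_1,v_1),(u_2,v_2)$ with the boundary asymptotics supplied by Theorem \ref{mr} in the regime under consideration. In each of cases (i)--(iv) of Theorem \ref{mru} the two solutions share the same leading boundary profile (a pure power of $\delta(x)$ without any logarithmic correction), so the ratios $u_1/u_2$ and $v_1/v_2$ are uniformly bounded above and below on $\bom$. This uniform comparability will be used crucially to justify Picone-type test functions all the way up to $\dom$.

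The first step is to apply the $m$-Laplacian D\'iaz--Sa\'a (Picone) inequality following \cite{GST2015}: testing the first equation for $u_1$ with $(u_1^m-u_2^m)/u_1^{m-1}$ and for $u_2$ with $(u_2^m-u_1^m)/u_2^{m-1}$, adding, and using the standard monotonicity inequality for the $m$-Laplacian, one obtains
\[
\int_\Omega \bigl(u_1^{1-m-p}v_1^{-q}-u_2^{1-m-p}v_2^{-q}\bigr)(u_1^m-u_2^m)\,dx\;\ge\;0,
\]
and, from the $v$-equation,
\[
\int_\Omega \bigl(u_1^r v_1^{1-m-s}-u_2^r v_2^{1-m-s}\bigr)(v_1^m-v_2^m)\,dx\;\ge\;0.
\]
A truncation argument (substituting $u_i+\varepsilon$ and letting $\varepsilon\to0$, exactly as in Section 2 of the present paper) legitimates this test in the singular setting thanks to the sharp boundary asymptotics of Theorem \ref{mr}.

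Next I would split each integrand via the identity $ab-cd=a(b-d)+d(a-c)$. Since $1-m-p<0$, $1-m-s<0$ and $m>1$, the pieces that depend on only one of the two solutions are pointwise non-positive when multiplied by $(u_1^m-u_2^m)$ or $(v_1^m-v_2^m)$; transferring them to the right-hand side gives
\[
J_u:=\int_\Omega u_2^{1-m-p}(v_1^{-q}-v_2^{-q})(u_1^m-u_2^m)\,dx \;\ge\; I_u \;\ge\; 0,
\]
\[
J_v:=\int_\Omega v_2^{1-m-s}(u_1^r-u_2^r)(v_1^m-v_2^m)\,dx \;\ge\; I_v \;\ge\; 0,
\]
where $I_u=\int_\Omega v_1^{-q}(u_2^{1-m-p}-u_1^{1-m-p})(u_1^m-u_2^m)\,dx$ and $I_v=\int_\Omega u_1^r(v_2^{1-m-s}-v_1^{1-m-s})(v_1^m-v_2^m)\,dx$ are strictly positive whenever $u_1\not\equiv u_2$ or $v_1\not\equiv v_2$ respectively.

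The core of the argument is then to bound $J_u$ and $J_v$ in terms of $I_u,I_v$ by H\"older's inequality. Using the Lagrange-type estimates $|a^k-b^k|\lesssim |k|\,\max(a,b)^{k-1}|a-b|$ applied separately to the powers $m,-q,r,1-m-p,1-m-s$, and exploiting the comparability $u_1\sim u_2$, $v_1\sim v_2$ from Theorem \ref{mr}, I would choose H\"older exponents so that Cauchy--Schwarz yields
\[
J_u\;\le\; C\sqrt{I_u\,I_v},\qquad J_v\;\le\; C\sqrt{I_u\,I_v},
\]
where, after tracking the factors $q,r,(1+p),(1+s)$ coming from the mean-value estimates, $C^2$ is controlled by a quantity whose leading term is $qr/((1+p)(1+s))$. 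Multiplying $I_u\le J_u$ and $I_v\le J_v$ and inserting these bounds gives $I_u I_v\le C^2\,I_u I_v$; hypothesis \eqref{eqf} forces $C^2<1$, so $I_u=I_v=0$. The equality case of D\'iaz--Sa\'a then forces $u_1/u_2$ and $v_1/v_2$ to be constant on $\Omega$, and the common zero boundary trace yields $u_1\equiv u_2$, $v_1\equiv v_2$.

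\textbf{Main obstacle.} The delicate point is the Cauchy--Schwarz step: the weights $u_i^{a}v_i^{b}$ arising from the splitting must be distributed between the two H\"older factors so that one factor is exactly controlled by $I_u$, the other by $I_v$, with a proportionality constant matching \eqref{eqf} sharply. The uniform two-sided comparability $u_1\sim u_2$, $v_1\sim v_2$ on $\bom$, which holds \emph{only} in cases (i), (iii), (iv), (vii) of Theorem \ref{mr} (i.e.\ without logarithmic corrections), is indispensable to make the weights of the two integrals equivalent pointwise, and this is precisely why Theorem \ref{mru} is restricted to the four sub-cases listed.
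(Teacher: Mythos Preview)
Your approach is quite different from the paper's. The paper does \emph{not} use a D\'iaz--Sa\'a/Picone argument at all; instead it runs a short scaling-comparison argument. After establishing (via Proposition~\ref{uni} and Proposition~\ref{tm1}) that any two solutions satisfy $u_1\sim u_2$ (or $v_1\sim v_2$) near $\partial\Omega$, one sets $M=\inf\{A>1:Au_1\ge u_2\}$, and then uses the weak comparison principle of Proposition~\ref{gst} twice: from $u_2\le M u_1$ one gets that a suitable multiple of $v_2$ is a subsolution of the $v$-problem with coefficient $u_1^r$, hence $v_2\le M^{\alpha}v_1$; feeding this back gives $u_2\le M^{\alpha\beta}u_1$ with $\alpha\beta=\frac{qr}{(1+p)(1+s)}<1$, contradicting the minimality of $M$. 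No integral identities are needed, only the pointwise comparison principle.

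Your proposal has a genuine gap precisely at the step you flag as ``delicate''. After the mean-value estimates, $I_u$ carries the weight $u^{-1-p}v^{-q}|u_1-u_2|^2$ (with prefactor $\sim m(m+p-1)$) and $I_v$ carries $u^{r}v^{-1-s}|v_1-v_2|^2$ (with prefactor $\sim m(m+s-1)$), whereas $J_u$ carries $u^{-p}v^{-q-1}|u_1-u_2||v_1-v_2|$. A single Cauchy--Schwarz on $J_u$ forces you to split the weight into two factors, one of which must equal $u^{-1-p}v^{-q}$ and the other $u^{r}v^{-1-s}$; but $u^{-p}v^{-q-1}\neq \bigl(u^{-1-p}v^{-q}\bigr)^{1/2}\bigl(u^{r}v^{-1-s}\bigr)^{1/2}$ unless a nontrivial algebraic relation among $p,q,r,s$ holds, so the bound $J_u\le C\sqrt{I_uI_v}$ is not available in general. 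Moreover, even at the formal level the constants coming out of the mean-value theorem are $q,r,m+p-1,m+s-1$, so the combination that naturally appears is $\dfrac{qr}{(m+p-1)(m+s-1)}$, not $\dfrac{qr}{(1+p)(1+s)}$; your claim that ``$C^2$ has leading term $qr/((1+p)(1+s))$'' is unsupported. Without a concrete mechanism producing exactly the exponent in \eqref{eqf}, the argument does not close. The paper's iteration avoids this entirely because the exponents $\frac{r}{1+s}$ and $\frac{q}{1+p}$ arise directly from the homogeneity of each scalar comparison step.
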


The rest of the paper is organized as follows. In Section 2 we collect some preliminary results which include several properties of the solution to the singular problem \eq{sys2}. Finally in Section 3 and Section 4 we give the proof of the main results.

\section{Preliminary results}
In this section we collect some preliminary results that will be useful in the study of \eq{sys}
\begin{proposition}\label{gst}{\rm (see \cite[Theorem 3.1]{GST2015}).}
Let $1< m< \infty$ and $u$, $v\in W_{0}^{1, m}(\Omega)$ be positive functions satisfying the subsolution and supersolution inequalities
\begin{equation*}
-\Delta_{m} u\leq K(x)u^{-\theta} \quad\mbox{ in }\Omega,
\end{equation*}

\begin{equation*}
-\Delta_{m} v\geq K(x)v^{-\theta} \quad\mbox{ in }\Omega,
\end{equation*}
in the sense of distributions in $W^{-1, \frac{m}{m-1}}(\Omega)$, where $K: \Omega \rightarrow (0, \infty)$ is a positive function and $K\in L_{loc}^{1}(\Omega)$. Then
$$
u\leq v \quad\mbox{ a.e in }\Omega.
$$ 

\end{proposition}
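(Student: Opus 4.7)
The plan is to subtract the distributional inequalities for $u$ and $v$, then test the resulting inequality against the positive part $\phi=(u-v)^+\in W_{0}^{1,m}(\Omega)$, and exploit two monotonicities: the strict monotonicity of the vector field $\xi\mapsto|\xi|^{m-2}\xi$ on the left, and the fact that $t\mapsto t^{-\theta}$ is non-increasing on $(0,\infty)$ (valid for $\theta\ge 0$) on the right.

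Once $(u-v)^+$ is admissible as a test function, the subtraction yields
\[
\int_\Omega\bigl(|\nabla u|^{m-2}\nabla u-|\nabla v|^{m-2}\nabla v\bigr)\cdot\nabla(u-v)^+\,dx\le\int_\Omega K(x)\bigl(u^{-\theta}-v^{-\theta}\bigr)(u-v)^+\,dx.
\]
On $\{u>v\}$ the right-hand integrand is $\le 0$, since $u^{-\theta}\le v^{-\theta}$ and $K>0$. By the elementary inequality
\[
\bigl(|a|^{m-2}a-|b|^{m-2}b\bigr)\cdot(a-b)\ge 0\qquad\text{for all }a,b\in\R^N,
\]
the left-hand integrand is pointwise $\ge 0$, with equality only when $\nabla u=\nabla v$. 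Combining, both sides must vanish, so $\nabla u=\nabla v$ a.e.\ on $\{u>v\}$, hence $\nabla(u-v)^+\equiv 0$ in $\Omega$; since $(u-v)^+\in W_{0}^{1,m}(\Omega)$, the zero-trace condition forces $(u-v)^+\equiv 0$, i.e.\ $u\le v$ a.e.

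The principal obstacle is the rigorous use of $(u-v)^+$ as a test function, because the singular right-hand side $Ku^{-\theta}$ need not define a bounded linear functional on $W_{0}^{1,m}(\Omega)$: $K$ is only locally integrable and $u$, $v$ vanish on $\partial\Omega$, so $u^{-\theta}$, $v^{-\theta}$ may blow up near $\partial\Omega$. I would handle this by a standard truncation--exhaustion argument: for $k\in\N$ fix a smooth cut-off $\eta_n\in C_{c}^\infty(\Omega)$ with $0\le\eta_n\le 1$ and $\eta_n\equiv 1$ on $\{\delta(x)\ge 2/n\}$, then use $\phi_{n,k}=\eta_n\min\{(u-v)^+,k\}\in W_{0}^{1,m}(\Omega)$ with compact support. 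Positivity of $u$, $v$ in $\Omega$ (together with local regularity for $-\Delta_m$) gives uniform positive lower bounds on $\mathrm{supp}(\eta_n)$, so all four integrals are finite. Letting first $k\to\infty$ by monotone convergence (the right-hand integrand keeps a fixed sign on $\{u>v\}$, the left-hand integrand is nonnegative) and then $n\to\infty$ by dominated/monotone convergence yields the displayed inequality for $\phi=(u-v)^+$, after which the argument above closes the proof.
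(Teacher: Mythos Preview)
The paper does not prove this proposition; it is quoted without proof from \cite[Theorem~3.1]{GST2015} as a preliminary tool. Your argument---subtracting the two inequalities, testing against $(u-v)^+$, and combining the monotonicity of $\xi\mapsto|\xi|^{m-2}\xi$ on the left with the non-increase of $t\mapsto t^{-\theta}$ (for $\theta\ge 0$) on the right---is precisely the standard route and is essentially what \cite{GST2015} carries out.

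Two remarks on the technical justification you sketch. First, your claim that ``positivity of $u$, $v$ together with local regularity for $-\Delta_m$ gives uniform positive lower bounds on $\mathrm{supp}(\eta_n)$'' is fine for $v$ (it is $m$-superharmonic, so weak Harnack applies) but not for $u$: a \emph{sub}solution enjoys no such lower bound. What one actually needs is $K u^{-\theta}\in L^1_{\rm loc}(\Omega)$, and this is already implicit in the phrase ``in the sense of distributions''; in \cite{GST2015} it is part of the standing hypotheses. With that granted, your compactly supported test functions $\phi_{n,k}$ are admissible. Second, in the passage $n\to\infty$ the cross term carrying $\nabla\eta_n$ is not obviously small by naive bounds (since $|\nabla\eta_n|\sim n$); the clean way is Hardy's inequality on the smooth domain $\Omega$: from $(u-v)^+\in W_0^{1,m}(\Omega)$ one has $(u-v)^+/\delta\in L^m(\Omega)$, whence
\[
\int_\Omega |\nabla\eta_n|^m\,|(u-v)^+|^m\,dx\;\le\; C\int_{\{\delta<2/n\}}\Big|\frac{(u-v)^+}{\delta}\Big|^m dx\;\longrightarrow\;0.
\]
With these two points addressed, your proof is complete.
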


\begin{proposition}\label{uni}
Let $(u, v)$ be a solution of \eq{sys}. Then there exists a constant $c>0$ such that
$$
u(x)\geq c\delta(x) \quad\mbox{ and } v(x)\geq c\delta(x) \quad\mbox{ in }\Omega.
$$
\end{proposition}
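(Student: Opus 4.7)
The strategy is to compare $u$ and $v$ from below against a scaled multiple of the torsion function, via the weak comparison principle of Proposition \ref{gst}. Let $z \in C^{1,\alpha}(\overline{\Omega})$ be the unique positive solution of $-\Delta_m z = 1$ in $\Omega$ with $z = 0$ on $\partial\Omega$; by classical results (Hopf's lemma for the $m$-Laplacian) one has $z \sim \delta$. Since $u, v$ are continuous up to $\partial\Omega$ with zero trace, set $M := \max\{\|u\|_{L^\infty(\Omega)}, \|v\|_{L^\infty(\Omega)}\} < \infty$.

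For the lower bound on $u$, the pointwise inequalities $u, v \leq M$ force $u^{-p}v^{-q} \geq M^{-(p+q)}$ in $\Omega$. Setting $\kappa_1 := M^{-(p+q)/(m-1)}$, the function $\kappa_1 z$ satisfies
$$
-\Delta_m(\kappa_1 z) = \kappa_1^{m-1} = M^{-(p+q)} \leq u^{-p}v^{-q} = -\Delta_m u \quad\text{in } \Omega,
$$
together with $\kappa_1 z = u = 0$ on $\partial\Omega$. Applying Proposition \ref{gst} with constant $K \equiv M^{-(p+q)}$ and $\theta = 0$ yields $u \geq \kappa_1 z \geq c\,\delta$ in $\Omega$.

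For the lower bound on $v$, combining the previous step with $v \leq M$ gives $-\Delta_m v = u^r v^{-s} \geq c_1^r M^{-s} \delta^r$ in $\Omega$. When $r = 0$ the right-hand side is a positive constant and the argument just given for $u$ applies verbatim. When $r > 0$ the lower bound $\delta^r$ degenerates at $\partial\Omega$, and one cannot directly compare $v$ with $\kappa_2 z$. In this case I would appeal to the Hopf boundary-point lemma for the $m$-Laplacian: since $-\Delta_m v > 0$ in $\Omega$, $v > 0$ in $\Omega$, $v = 0$ on $\partial\Omega$, and $v$ is sufficiently regular near $\partial\Omega$, one obtains $\partial v/\partial \nu < 0$ on $\partial\Omega$, hence $v \geq c\,\delta$ in a boundary strip; interior positivity $v \geq c_K > 0$ on compacts $K \subset \Omega$ then produces $v \geq c\,\delta$ globally. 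As an alternative (should up-to-boundary regularity be in question), at each $x_0 \in \partial\Omega$ I would pick an interior tangent ball $B_R(y_0) \subset \Omega$ with $x_0 \in \partial B_R(y_0)$ and construct an explicit local subsolution of the form $\varphi(x) = A\bigl(1 - e^{-\gamma(R^2 - |x - y_0|^2)}\bigr)$ on $B_R(y_0)$, choosing $A, \gamma$ so that $-\Delta_m \varphi \leq c_1^r M^{-s}\delta^r$ in $B_R(y_0)$ and $\varphi \leq v$ on $\partial B_R(y_0)$ (using interior positivity of $v$); comparison on $B_R(y_0)$ then gives $v \geq \varphi \geq c\,\delta$ near $x_0$.

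The main obstacle is precisely the case $r > 0$ in the estimate for $v$: the decay $\delta^r$ of the lower bound on $-\Delta_m v$ forbids the clean one-step global comparison that worked for $u$, and some genuinely local boundary-point argument (Hopf-type or an explicit interior-ball barrier) is unavoidable. The rest of the proof is a routine application of Proposition \ref{gst}.
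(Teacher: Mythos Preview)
For $u$ your argument coincides with the paper's: both compare $u$ against the $m$-torsion function and invoke Hopf's lemma on that auxiliary function, followed by the weak comparison principle. For $v$ you correctly flag a point the paper passes over with ``in a similar way'': since $r>0$, the available lower bound $-\Delta_m v\ge c^{r}M^{-s}\delta(x)^{r}$ degenerates on $\partial\Omega$, so one cannot simply compare $v$ with a multiple of the torsion function. Your two remedies are valid in spirit but heavier than necessary; moreover option~(a) leans on $C^{1}$-regularity of $v$ up to $\partial\Omega$, which is not part of the definition of weak solution here, and the explicit barrier in option~(b) on the full ball $B_R(y_0)$ would need to be reworked on an annulus to avoid the degeneracy of $\nabla\varphi$ at the center. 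A cleaner fix---and the natural reading of the paper's ``similarly''---is to keep the auxiliary-function structure intact: let $w$ solve $-\Delta_m w=c^{r}M^{-s}\delta(x)^{r}$ in $\Omega$, $w=0$ on $\partial\Omega$. The right-hand side is bounded and H\"older continuous, so $w\in C^{1,\alpha}(\overline\Omega)$ by standard regularity; Hopf's lemma applied to $w$ (not to $v$) then gives $w\ge c\delta$, and comparison yields $v\ge w\ge c\delta$. This keeps the $v$-estimate genuinely parallel to the $u$-estimate and sidesteps any boundary-regularity question for $v$ itself.
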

\begin{proof}
Let $w$ be the solution of 
\begin{equation*}
\left\{
\begin{aligned}
-\Delta_{m}w&=  C, \;\; w>0 &&\quad\mbox{ in }\Omega,\\
w&= 0 &&\quad\mbox{ on }\partial{\Omega},
\end{aligned}
\right.
\end{equation*}
where $C= ||u||_{\infty}^{-p}||v||_{\infty}^{-q}> 0$. Since $w$ is $m$-superharmonic, by Hopf's Boundary Point lemma we have
$$
\frac{\partial w}{\partial \nu}(x_{0})< 0 \quad\mbox{ on }\partial{\Omega},
$$
which gives 
$$
|\nabla w(x_{0})|> c> 0 \quad\mbox{ on }\partial{\Omega},\quad\mbox{ for some positive constant } c.
$$
Thus,
$$
0<c_{1}< \Big|\frac{\partial w}{\partial \nu}(x_{0})\Big|= \lim_{t\rightarrow 0^{-}}\frac{w(x_{0}+t\nu)}{|t|}.
$$
Hence, there exists $t_{0}< 0$ such that
$$
\frac{w(x_{0}+t\nu)}{|t|}> c_{1} \quad\mbox{ for all } t\in (t_{0}, 0),
$$
which further yields $w(x)\geq c_{1}|t|= c_{1}\delta(x)$. Using the definition of $C$ and $w$ above, we deduce that
$$
-\Delta_{m}u\geq C= -\Delta_{m}w  \quad\mbox{ in }\Omega.
$$
Using the Comparison Principle we obtain 
$$u(x)\geq w(x)\geq c\delta(x) \quad\mbox{ in }\Omega.
$$
In a similar way we prove that $v(x)\geq c\delta(x)$ in $\Omega$. 

\end{proof}

\begin{proposition}\label{tm1}
Let $p\geq 0$ and $K: \Omega \rightarrow (0, \infty)$ be a continuous function such that $K(x)\sim \delta(x)^{-q}$, where $q\geq 0$ and $p\Big(1-\frac{1}{m}\Big)+q< 2-\frac{1}{m}$. Then, the problem 
\begin{equation}\label{eq8a}
\left\{
\begin{aligned}
-\Delta_{m} u&= K(x)u^{-p} &&\quad\mbox{ in }\Omega,\\
u&= 0 &&\quad\mbox{ on }\partial{\Omega},
\end{aligned}
\right.
\end{equation}
has a unique solution $u\in W_{0}^{1, m}(\Omega)$ and:
\begin{itemize}
\item[{\rm (i)}] If $p+q<1$ then $u\in C^{1, \alpha}(\overline{\Omega})$ for some $\alpha \in (0, 1)$ and $u(x)\sim \delta(x)$.
\item[{\rm (ii)}] If $p+q=1$ then $u\in W_{0}^{1, \tau}(\Omega)\cap C^{0, \beta}(\overline{\Omega})$ for some $\beta \in (0, 1)$ and for all $m\leq \tau< \infty$. Also $u\sim \delta(x) \log^{\frac{1}{m+p-1}}\Big(\frac{1}{\delta(x)}\Big)$. 
\item[{\rm (iii)}] If $p+q>1$ then $u\in W_{0}^{1, \tau}(\Omega)\cap C^{0, \beta}(\overline{\Omega})$ for some $\beta \in (0, 1)$ and for all $m\leq \tau< \frac{m+p-1}{p+q-1}$. Also $u\sim \delta(x)^{\frac{m-q}{m+p-1}}$. 
\end{itemize}
\end{proposition}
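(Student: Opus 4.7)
The plan is to exploit the sub-/supersolution structure of (\ref{eq8a}), the comparison principle already stated in Proposition \ref{gst}, and interior $C^{1,\alpha}$ regularity for the $m$-Laplacian. Existence of $u\in W_0^{1,m}(\Omega)$ and the plain Hölder regularity statements (without the sharp $\tau$-range) are already contained in \cite{BGH2015,GMS2012,GST2015}. Uniqueness is an immediate consequence of Proposition \ref{gst}: if $u_1,u_2$ are two solutions, then each is both a sub- and supersolution of the other's equation with $\theta=p$, hence $u_1\le u_2$ and $u_2\le u_1$. I would therefore focus the body of the proof on the boundary asymptotics and the optimal $W_0^{1,\tau}$ range.

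For the sharp boundary behaviour I would use the ansatz $w(x)=\eta(\delta(x))$ near $\partial\Omega$ with the candidate profiles $\eta(t)=t$, $\eta(t)=t\log^{1/(m+p-1)}(1/t)$, and $\eta(t)=t^{(m-q)/(m+p-1)}$ in cases (i), (ii), (iii) respectively. Since $|\nabla\delta|=1$ on a tubular neighbourhood $\Omega_\rho=\{\delta<\rho\}$ of $\partial\Omega$, a direct computation gives
\begin{equation*}
-\Delta_m w = -\bigl[\eta'(\delta)^{m-1}\bigr]' + \eta'(\delta)^{m-1}(-\Delta\delta),
\end{equation*}
and the exponent $\alpha=(m-q)/(m+p-1)$ is precisely the one that balances the leading order ODE $[\eta'(t)^{m-1}]' \sim t^{-q}\eta(t)^{-p}$; the log profile in (ii) arises as the borderline solution when $p+q=1$. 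Multiplying by suitable constants and adding an interior correction from solving an auxiliary Dirichlet problem in $\Omega\setminus\Omega_\rho$, I would produce a subsolution $c_1\eta(\delta)$ and a supersolution $c_2\eta(\delta)$, and then conclude $c_1\eta(\delta)\le u\le c_2\eta(\delta)$ by Proposition \ref{gst}.

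For the Sobolev regularity I would upgrade the pointwise rate to a gradient rate. On the ball $B_{\delta(x)/2}(x)\subset\Omega$ the function $u$ satisfies $-\Delta_m u = Ku^{-p}$ with right-hand side controlled by $C\,\delta(x)^{-q}\eta(\delta(x))^{-p}$. The rescaling $\tilde u(y)=u(x+\tfrac{\delta(x)}{2}y)/\eta(\delta(x))$ produces a bounded solution on $B_1(0)$ of a rescaled $m$-Laplace equation whose data is uniformly bounded in $x$, so the interior $C^{1,\alpha}$ estimate of Tolksdorf/DiBenedetto gives $|\nabla\tilde u(0)|\le C$, and hence
\begin{equation*}
|\nabla u(x)|\le C\,\frac{\eta(\delta(x))}{\delta(x)}\le C\,\eta'(\delta(x)) \quad\text{in }\Omega_\rho.
\end{equation*}
Integrating in the tubular coordinates yields
\begin{equation*}
\int_\Omega|\nabla u|^\tau\,dx \le C\int_0^\rho \eta'(t)^\tau\,dt + C,
\end{equation*}
which is finite in case (iii) iff $(\alpha-1)\tau>-1$, i.e.\ $\tau<(m+p-1)/(p+q-1)$, and for every finite $\tau$ in case (ii) since $\eta'(t)=O(\log^{1/(m+p-1)}(1/t))$; case (i) is automatic from $u\in C^{1,\alpha}(\overline\Omega)$. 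Sharpness is read off from the matching lower bound $u\ge c_1\eta(\delta)$: the integral of $|\nabla u|^\tau$ diverges at the critical $\tau$, so no solution with these asymptotics can belong to $W_0^{1,\tau}$ beyond the threshold.

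The main obstacle is the gradient estimate up to the boundary in the third step: the standard interior $C^{1,\alpha}$ theory for the $m$-Laplacian depends on the $L^\infty$-norm of the right-hand side, which blows up as $\delta(x)\to 0$, and one must track the precise $\delta$-weights in both $u$ and the data so that after rescaling the right-hand side is uniformly bounded in $x$. This is also the only point where the two-sided asymptotics of Step 2 are genuinely used, and it is what pins down the optimal exponent $(m+p-1)/(p+q-1)$.
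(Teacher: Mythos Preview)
Your argument is correct, but the route to the $W_0^{1,\tau}$ regularity differs from the paper's. For the boundary asymptotics you use the same sub/supersolution mechanism as the paper (the paper phrases the barriers in terms of the first $m$-eigenfunction $\varphi\sim\delta$ rather than $\delta$ directly, which is cosmetic). For the Sobolev range, however, the paper does \emph{not} derive a pointwise bound on $|\nabla u|$ via rescaling and interior Tolksdorf/DiBenedetto estimates. Instead it introduces the auxiliary \emph{linear} problem $-\Delta w=\theta(x):=K(x)u^{-p}$, obtains $|\nabla w|\le C\delta^{1-a}$ (resp.\ $C\log^{1-a}(1/\delta)$) from the explicit asymptotics of $w$ together with a local gradient estimate for the ordinary Laplacian, and then invokes Iwaniec's divergence-form estimate: since $\Delta_m u=\Delta w=\operatorname{div}(\nabla w)$, one has $\nabla u\in L^\tau$ whenever $\nabla w\in L^{\tau/(m-1)}$. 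Both routes yield the same threshold $\tau<(m+p-1)/(p+q-1)$; yours is more self-contained and gives the stronger pointwise bound $|\nabla u(x)|\le C\,\eta(\delta(x))/\delta(x)$, while the paper's approach avoids rescaling and tracking the blow-up of the data under the nonlinear interior estimate, at the price of importing Iwaniec's theorem.

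One small remark: your last sentence on sharpness is not quite justified as written. A lower bound $u\ge c_1\eta(\delta)$ does not by itself produce a pointwise lower bound on $|\nabla u|$, so the divergence of $\int|\nabla u|^\tau$ at the critical $\tau$ does not follow directly. The paper treats optimality in a separate remark via an explicit example $u=\varphi^{(m-a)/(m-1)}$ with $\theta:=-\Delta_m u$ computed by hand; since the Proposition itself does not assert sharpness, this does not affect the validity of your proof.
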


The key results in proving Proposition \ref{tm1} are the following:
\begin{proposition}\label{p1}
Let $u\in W_{0}^{1, m}(\Omega)\cap C(\overline{\Omega})$, $m>1$ satisfy 
\begin{equation}\label{eq1}
\left\{
\begin{aligned}
-\Delta_{m}u&= \theta(x) &&\quad\mbox{ in }\Omega,\\
u&= 0 &&\quad\mbox{ on }\partial{\Omega},
\end{aligned}
\right.
\end{equation}
where $\theta: \Omega \rightarrow (0, \infty)$ is a continuous function.
\begin{itemize}
\item[{\rm (i)}] If $\theta(x)\sim \delta(x)^{-a}$ for $a\in \Big(1, 2-\frac{1}{m}\Big)$ then $u\in W_{0}^{1, p}(\Omega)$ for all $p\in \Big[m, \frac{m-1}{a-1}\Big)$.
\item[{\rm (ii)}] If $\theta(x)\sim \delta(x)^{-1}\log^{-a}\Big(\frac{1}{\delta(x)}\Big)$ for $a\in (0, 1)$ then $u\in W_{0}^{1, p}(\Omega)$ for all $p\in [m, \infty)$.
\end{itemize}
\end{proposition}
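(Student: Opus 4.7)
The plan is to establish the $W^{1,p}$ regularity in two steps: first, obtain sharp pointwise estimates for $u$ near $\partial\Omega$ by comparison with explicit barriers; second, upgrade these to a pointwise gradient bound via an interior rescaling argument combined with the classical interior $C^{1,\alpha}$ regularity for the $m$-Laplacian.

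For part (i), set $\beta := (m-a)/(m-1)$; since $a \in (1, 2-1/m)$, one checks $\beta \in ((m-1)/m, 1)$. In a tubular neighborhood $\Omega_\eta := \{x \in \Omega : \delta(x) < \eta\}$ where $\delta$ is $C^2$, a direct computation using $|\nabla\delta| = 1$ gives
\[
-\Delta_m(\delta^\beta) = \beta^{m-1}(1-\beta)(m-1)\,\delta^{-a} \;-\; \beta^{m-1}\,\delta^{1-a}\,\Delta\delta,
\]
and the first term dominates as $\delta \to 0$, so $-\Delta_m(\delta^\beta) \sim \delta^{-a}$, matching the prescribed rate of $\theta$. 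Constants $0 < c_1 < c_2$ can then be chosen so that $c_1\delta^\beta$ and $c_2\delta^\beta$ are, respectively, a sub- and a supersolution of \eq{eq1} in $\Omega_\eta$; the correct ordering on $\partial\Omega_\eta$ is obtained using that $u \in C(\overline\Omega)$ is strictly positive in $\Omega$ (so has a positive lower bound on $\{\delta \geq \eta\}$ and a finite upper bound). The comparison principle (Proposition \ref{gst} with exponent $0$) then yields
\[
c\,\delta(x)^\beta \leq u(x) \leq C\,\delta(x)^\beta \quad\mbox{ in } \Omega.
\]

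The gradient bound follows from rescaling. For $x_0 \in \Omega$ with $d := \delta(x_0)$ small, set $v(y) := d^{-\beta} u(x_0 + (d/2)y)$ for $y \in B(0,1)$. A direct computation gives $-\Delta_m v(y) = 2^{-m} d^{m-\beta(m-1)}\,\theta(x_0 + (d/2)y)$, and the choice of $\beta$ makes $m - \beta(m-1) = a$; since $\theta(x_0 + (d/2)y) \sim d^{-a}$ on $B(0,1)$, this shows $\|\Delta_m v\|_{L^\infty(B(0,1))}$ is uniformly bounded in $x_0$, while the pointwise bound above gives $\|v\|_{L^\infty(B(0,1))} \leq C$. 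The interior $C^{1,\alpha}$ regularity of DiBenedetto--Tolksdorf then yields $|\nabla v(0)| \leq C$, which scales back to $|\nabla u(x_0)| \leq C\,\delta(x_0)^{\beta-1}$. Hence $\int_\Omega |\nabla u|^p\,dx \leq C\int_\Omega \delta^{p(\beta-1)}\,dx$, and this is finite precisely when $p(1-\beta) < 1$, i.e.\ $p < (m-1)/(a-1)$.

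Part (ii) proceeds along identical lines with the ansatz $w(x) = \delta(x)\log^{s}(1/\delta(x))$, where $s := (1-a)/(m-1) \in (0, 1/(m-1))$. The analogous barrier computation gives $-\Delta_m w \sim \delta^{-1}\log^{-a}(1/\delta)$; comparison then yields $u \sim w$, and the same rescaling (now with $v(y) := w(x_0)^{-1} u(x_0 + (d/2)y)$) produces $|\nabla u(x_0)| \leq C\log^{s}(1/\delta(x_0))$. Since arbitrary powers of $\log(1/\delta)$ are integrable over $\Omega$, one gets $u \in W_0^{1,p}(\Omega)$ for every $p \in [m,\infty)$. The main technical obstacle is executing the barrier computations cleanly and globalising the sub- and supersolutions (matching the boundary data on $\partial\Omega_\eta$ and handling the interior on $\{\delta \geq \eta\}$); once this is done, the choice of $\beta$ (resp.\ $s$) is exactly what makes the leading singular term in $-\Delta_m w$ match $\theta$, and all remaining error terms are strictly lower order.
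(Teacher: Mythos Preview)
Your argument is correct, but it follows a genuinely different route from the paper's. The paper does \emph{not} work directly with $u$; instead it introduces an auxiliary function $w$ solving the \emph{linear} problem $-\Delta w=\theta$, $w=0$ on $\partial\Omega$, obtains $w\sim\delta^{2-a}$ (resp.\ $w\sim\delta\log^{1-a}(A/\delta)$) via sub/supersolutions for the ordinary Laplacian, and then uses a local $W^{2,p}$ gradient estimate (Lemma~\ref{l2}) to get $|\nabla w|\leq c\,\delta^{1-a}$. The punchline is Iwaniec's theorem (Lemma~\ref{l1}): since $\Delta_m u=-\theta=\Delta w=\operatorname{div}(\nabla w)$, one takes $\Phi=\nabla w$ and reads off $\nabla u\in L^p$ as soon as $\nabla w\in L^{p/(m-1)}$, which gives the same threshold $p<(m-1)/(a-1)$.

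Your approach trades the linear auxiliary problem and Iwaniec's black box for direct $m$-Laplacian barriers plus the DiBenedetto--Tolksdorf interior $C^{1,\alpha}$ theory under rescaling. What you gain is a pointwise gradient bound on $u$ itself, $|\nabla u(x)|\leq C\,\delta(x)^{(1-a)/(m-1)}$, which is strictly more information than the paper extracts here; you also recover the boundary rate $u\sim\delta^{(m-a)/(m-1)}$ as a by-product, whereas the paper defers this to Proposition~\ref{tm1}. What the paper's route buys is that all barrier and gradient computations are done for the \emph{linear} Laplacian, so they are shorter and avoid the tubular-neighborhood bookkeeping inherent to nonlinear barriers. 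Both arguments are valid and arrive at the same optimal range of $p$.
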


\medskip

\begin{lemma}\label{l1} {\rm (see \cite[Theorem 2]{I1983}). }
Assume $p\geq m> 1$, $u\in W_{0}^{1, m}(\Omega)$ and $\Phi \in L^{\frac{p}{m-1}}(\Omega; \R^{N})$ satisfy 
$$
\Delta_{m}u= {\rm div}(\Phi) \quad\mbox{ in }\Omega.
$$
Then $\nabla u\in L^{p}(\Omega; \R^{N})$ and there exist $c= c(m, p, N)$ such that
$$
||\nabla u||_{L^{p}(\Omega)}^{m-1}\leq c||\Phi||_{L^{\frac{p}{m-1}}(\Omega)}.
$$
\end{lemma}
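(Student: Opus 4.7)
The plan is to follow Iwaniec's nonlinear Hodge decomposition approach. Rewrite the hypothesis as $\mathrm{div}(F) = 0$ with $F := |\nabla u|^{m-2}\nabla u - \Phi$, so the vector field $F$ is solenoidal on $\Omega$. After a standard approximation step (extending $\Phi$ by zero and regularizing, with the corresponding smooth solutions $u_k$ tending to $u$ in $W_0^{1,m}$), it suffices to produce the bound for smooth data and pass to the limit using Fatou on the left and continuity on the right.

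The heart of the argument is a perturbation trick: introduce a small parameter $\varepsilon \in (-\varepsilon_0, \varepsilon_0)$ and apply the nonlinear Hodge decomposition to the vector field $|\nabla u|^\varepsilon \nabla u$. This produces $\psi_\varepsilon \in W_0^{1, m/(1+\varepsilon)}(\Omega)$ and a divergence-free field $G_\varepsilon \in L^{m/(1+\varepsilon)}(\Omega;\R^N)$ with
\[
|\nabla u|^\varepsilon \nabla u = \nabla \psi_\varepsilon + G_\varepsilon,
\]
and, crucially, Iwaniec's commutator estimate
\[
\|G_\varepsilon\|_{L^{m/(1+\varepsilon)}(\Omega)} \leq C_N\,|\varepsilon|\,\bigl\||\nabla u|^{1+\varepsilon}\bigr\|_{L^{m/(1+\varepsilon)}(\Omega)}.
\]
Testing the equation with $\psi_\varepsilon$ and substituting $\nabla \psi_\varepsilon = |\nabla u|^\varepsilon \nabla u - G_\varepsilon$ on both sides gives
\[
\int_\Omega |\nabla u|^{m+\varepsilon}\,dx = \int_\Omega |\nabla u|^{m-2}\nabla u \cdot G_\varepsilon\,dx + \int_\Omega \Phi\cdot\bigl(|\nabla u|^\varepsilon\nabla u - G_\varepsilon\bigr)\,dx.
\]
Applying Hölder on the first term with exponents $(m+\varepsilon)/(m-1)$ and $(m+\varepsilon)/(1+\varepsilon)$, and using the commutator bound, yields a factor $C|\varepsilon|\,\|\nabla u\|_{L^{m+\varepsilon}}^{m+\varepsilon}$ which, for $|\varepsilon|$ small enough, is absorbed into the left-hand side. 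What remains is the desired inequality
\[
\|\nabla u\|_{L^{m+\varepsilon}}^{m-1}\leq C(m,\varepsilon,N)\,\|\Phi\|_{L^{(m+\varepsilon)/(m-1)}(\Omega)}.
\]
Setting $p = m+\varepsilon$ gives the conclusion for $p$ in a small right-neighborhood of $m$. To reach every $p \geq m$, iterate: once $\nabla u \in L^{m+\varepsilon_0}$ is known, repeat the Hodge-decomposition argument starting from that improved integrability, bootstrapping through a finite chain $m < m+\varepsilon_0 < m+2\varepsilon_0 < \cdots$ until surpassing $p$, and interpolating back to the target exponent.

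The hard part, and the content of Iwaniec's contribution, is the commutator estimate in the second paragraph. It is not an immediate consequence of the boundedness of the Hodge/Riesz projector on $L^r$ for each fixed $r$; what is required is the quantitative statement that the operator sending $w \mapsto |w|^\varepsilon w - \nabla\Pi(|w|^\varepsilon w)$ (where $\Pi$ is the Hodge projection) has operator norm $O(|\varepsilon|)$ as $\varepsilon \to 0$, obtained via analyticity of the Riesz transforms in the relevant parameter and a Calderón–Zygmund / interpolation analysis in the Stein complex-interpolation spirit. Outside of this singular-integral input the rest of the proof is an algebraic rearrangement together with an absorption, both routine once one accepts the commutator estimate.
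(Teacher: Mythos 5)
The paper itself offers no proof of this lemma: it is quoted verbatim as Theorem~2 of Iwaniec's 1983 paper \cite{I1983} and used as a black box. So there is nothing in the paper to compare your argument against; what you have written is a reconstruction of the cited reference's proof, which for the purpose of this paper is ``overkill'' in the sense that the author would never have supplied it.

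As a reconstruction of Iwaniec's argument it is in the right spirit: the nonlinear Hodge decomposition of $|\nabla u|^{\varepsilon}\nabla u$, the stability estimate $\|G_{\varepsilon}\| = O(|\varepsilon|)$ coming from analyticity of the Hodge/Riesz projection in the exponent, testing with $\psi_{\varepsilon}$, absorption, and a finite bootstrap in the exponent are indeed the components of Iwaniec's method, and you correctly identify the commutator bound as the one genuinely hard singular-integral input. Two places where the sketch glosses over real issues: first, for $\varepsilon>0$ the pairing $\int|\nabla u|^{m-2}\nabla u\cdot\nabla\psi_{\varepsilon}$ is not obviously finite a~priori (one only knows $\nabla u\in L^{m}$, so $\nabla\psi_{\varepsilon}\in L^{m/(1+\varepsilon)}$, and the H\"older exponents do not match), and the H\"older application you describe already presumes $\nabla u\in L^{m+\varepsilon}$ --- exactly what is to be proved. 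This circularity is what the approximation step must resolve: one proves the \emph{a priori} inequality for smooth approximants $u_{k}$ with constant independent of $k$ and then passes to the limit, so the approximation is not merely a cosmetic preliminary but is where the uniformity of the constant must be tracked. Second, ``interpolating back to the target exponent'' at the end of the bootstrap is unnecessary --- once $\nabla u\in L^{m+j\varepsilon_{0}}$ for some $m+j\varepsilon_{0}\ge p$, the estimate at exponent $p$ itself follows by running the argument once more with the step size adjusted so as to land exactly on $p$; there is no interpolation between two endpoint estimates involved. With those points tightened the sketch is a faithful account of the result the paper invokes.
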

\begin{lemma}\label{l2}{\rm (see \cite[Lemma 2]{CRT1977} or \cite[Lemma 4.4]{BGH2015}). }
There exist $c>0$ such that if $B_{2r}(x_{0})\subset \Omega$, $0< r\leq 1$ and $v\in W_{0}^{2, p}(\Omega)$, for some $p> N$, then
\begin{equation}
||\nabla v||_{L^{\infty}(B_{r}(x_{0}))}\leq c\Big[r||\Delta v||_{L^{\infty}(B_{2r}(x_{0}))}+ \frac{1}{r}||v||_{L^{\infty}(B_{2r}(x_{0}))}\Big].
\end{equation}
\end{lemma}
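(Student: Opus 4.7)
The plan is to reduce the claim by rescaling to a fixed ball, and then prove the rescaled estimate by decomposing $v$ into a harmonic part (carrying the boundary data) and a Newtonian potential of $\Delta v$. I would set $w(y) := v(x_{0} + ry)$ for $y \in B_{2}(0)$; a direct change of variables gives
$$\|\nabla w\|_{L^{\infty}(B_{1})} = r\,\|\nabla v\|_{L^{\infty}(B_{r}(x_{0}))}, \quad \|\Delta w\|_{L^{\infty}(B_{2})} = r^{2}\,\|\Delta v\|_{L^{\infty}(B_{2r}(x_{0}))}, \quad \|w\|_{L^{\infty}(B_{2})} = \|v\|_{L^{\infty}(B_{2r}(x_{0}))}.$$
Multiplying the target inequality by $r$, it therefore suffices to prove the scale-free estimate
$$\|\nabla w\|_{L^{\infty}(B_{1})} \;\leq\; c\bigl[\|\Delta w\|_{L^{\infty}(B_{2})} + \|w\|_{L^{\infty}(B_{2})}\bigr].$$

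To prove this unit-scale version, I would split $w = w_{1} + w_{2}$ on $B_{3/2}(0)$, where $w_{1}$ is the harmonic extension on $B_{3/2}$ agreeing with $w$ on $\partial B_{3/2}$, and $w_{2} \in W_{0}^{2,p}(B_{3/2})$ solves $\Delta w_{2} = \Delta w$ in $B_{3/2}$. For the harmonic part, the classical interior gradient bound for harmonic functions, combined with the maximum principle, yields
$$\|\nabla w_{1}\|_{L^{\infty}(B_{1})} \;\leq\; C\,\|w_{1}\|_{L^{\infty}(B_{3/2})} \;\leq\; C\,\|w\|_{L^{\infty}(\partial B_{3/2})} \;\leq\; C\,\|w\|_{L^{\infty}(B_{2})}.$$
For the potential part, representing $w_{2}$ via the Green's function $G$ of the Dirichlet Laplacian on $B_{3/2}$ gives $\nabla w_{2}(x) = \int_{B_{3/2}} \nabla_{x} G(x,y)\,\Delta w(y)\,dy$, and since $|\nabla_{x} G(x,y)|$ is controlled by a constant times $|x-y|^{1-N}$, which is integrable on $B_{3/2}$, one obtains $\|\nabla w_{2}\|_{L^{\infty}(B_{1})} \leq C\,\|\Delta w\|_{L^{\infty}(B_{2})}$. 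Summing the two bounds completes the proof.

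The hypothesis $p > N$ is what makes the pointwise sup-norms meaningful on both sides of the inequality: by Morrey's embedding $W^{2,p}(\Omega) \hookrightarrow C^{1,\alpha}(\overline{\Omega})$ with $\alpha = 1 - N/p$, the gradient $\nabla v$ is actually continuous up to the boundary. The delicate step is the Newtonian-potential estimate, where one must justify differentiating under the integral sign for an $L^{\infty}$ right-hand side and control the weakly singular kernel $|x-y|^{1-N}$ uniformly for $x \in B_{1}$ and $y \in B_{3/2}$; this computation is classical but is the technical core of the argument. Everything else in the proof is an essentially mechanical change-of-variables plus an application of the maximum principle.
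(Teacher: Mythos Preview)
Your argument is correct and is essentially the classical proof of this estimate: rescale to the unit ball, split into a harmonic part controlled by the interior gradient estimate/maximum principle and a potential part controlled by the integrability of $|x-y|^{1-N}$. Note, however, that the paper does not supply its own proof of this lemma at all; it simply quotes the result from \cite[Lemma~2]{CRT1977} and \cite[Lemma~4.4]{BGH2015}. Your write-up is in fact the argument one finds in those references, so there is nothing to contrast.

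One small remark: the constraint $0<r\leq 1$ in the statement is only there to ensure the rescaled domain stays bounded (so a single constant $c$ works); your reduction makes this implicit, but it would be worth saying so explicitly. Also, the hypothesis $v\in W_{0}^{2,p}(\Omega)$ with $p>N$ is used, as you note, only to guarantee that $\Delta v\in L^{\infty}$ on compact subsets and that $\nabla v$ is continuous, so the sup-norms on both sides are finite; it plays no further role in the estimate itself.
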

\begin{lemma}\label{l3} {\rm (see \cite{LM1991}). }
Let $\Omega \subset \R^{N}$ be a bounded and smooth domain. Then
$$
\int_{\Omega}\delta(x)^{-a}dx< \infty \quad\mbox{ if and only if } a< 1.
$$
\end{lemma}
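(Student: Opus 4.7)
The plan is to reduce the integral to a small tubular neighborhood of $\partial\Omega$, where normal coordinates flatten the geometry and the convergence question becomes one-dimensional. Away from the boundary, $\delta(x)$ is bounded below by a positive constant, so that region contributes a harmless finite amount regardless of $a$.

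First I would fix $\varepsilon > 0$ small and split
$$
\int_{\Omega} \delta(x)^{-a}\, dx = \int_{\Omega \setminus \Omega_\varepsilon} \delta(x)^{-a}\, dx + \int_{\Omega_\varepsilon} \delta(x)^{-a}\, dx,
$$
where $\Omega_\varepsilon = \{x \in \Omega : \delta(x) < \varepsilon\}$. On the first piece $\delta(x) \geq \varepsilon$, so the integrand is bounded by $\varepsilon^{-a}$ and the integral is finite for every $a \in \R$. Thus convergence of the full integral is determined entirely by the boundary piece.

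Next, since $\partial\Omega$ is compact and smooth, the tubular neighborhood theorem provides $\varepsilon_0 > 0$ such that for $\varepsilon \in (0, \varepsilon_0)$ the map $\Phi : \partial\Omega \times [0, \varepsilon) \to \Omega_\varepsilon$ defined by $\Phi(y, t) = y - t\nu(y)$, with $\nu(y)$ the outward unit normal at $y$, is a $C^{1}$ diffeomorphism satisfying $\delta(\Phi(y,t)) = t$. Its Jacobian $J_\Phi(y,t)$ depends smoothly on $t$ and on the principal curvatures of $\partial\Omega$ and has the expansion $J_\Phi(y,t) = 1 + O(t)$ uniformly in $y$; in particular there exist constants $0 < c_1 \leq c_2$ with $c_1 \leq J_\Phi \leq c_2$ on $\partial\Omega \times [0, \varepsilon_0]$.

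A change of variables and Fubini then yield
$$
\int_{\Omega_\varepsilon} \delta(x)^{-a}\, dx \;\sim\; |\partial\Omega| \int_0^\varepsilon t^{-a}\, dt,
$$
and the elementary one-dimensional fact that $\int_0^\varepsilon t^{-a}\, dt$ is finite precisely when $a < 1$ gives the claim in both directions. The only nontrivial input is the existence of the tubular neighborhood together with the two-sided Jacobian bound, but both are standard consequences of the $C^{2}$ regularity of $\partial\Omega$; the rest is routine.
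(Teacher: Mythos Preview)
Your argument is correct and is the standard way to establish this fact. The paper itself does not prove the lemma at all: it is stated with a reference to Lazer--McKenna \cite{LM1991} and used without further comment, so there is no proof in the paper to compare your approach against.

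One trivial remark: your bound $\delta(x)^{-a}\le \varepsilon^{-a}$ on $\Omega\setminus\Omega_\varepsilon$ tacitly assumes $a\ge 0$. For $a<0$ the function $\delta^{-a}$ is bounded on all of $\overline\Omega$, so the integral is obviously finite and the ``if'' direction is immediate in that range. This does not affect the substance of your proof.
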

\begin{proof}[Proof of Proposition \ref{p1}]
Let $w\in C^{2}(\Omega)\cap C(\overline{\Omega})$ satisfy 
\begin{equation}\label{eq2}
\left\{
\begin{aligned}
-\Delta w&= \theta(x) &&\quad\mbox{ in }\Omega,\\
w&=0 &&\quad\mbox{ on }\partial{\Omega}.
\end{aligned}
\right.
\end{equation}
Denote by $\phi>0$ the first eigenfunction of $-\Delta$ in $\Omega$.
\begin{itemize}
\item[{\rm (i)}] Assume $\theta(x)\sim \delta(x)^{-a}$ for some $a\in \Big(1, 2-\frac{1}{m}\Big)$. Then
$$
\underline{w}(x)= \frac{1}{c}\phi(x)^{2-a} \quad\mbox{ and } \quad \overline{w}(x)= c\phi(x)^{2-a}
$$ 
are respectively sub and supersolutions of \eq{eq2} provided $c>1$ is large enough. Hence
\begin{equation}\label{eq3}
w(x)\sim \delta(x)^{2-a}.
\end{equation}
We claim that
\begin{equation}\label{eq4}
|\nabla w(x)| \leq c\delta(x)^{1-a} \quad\mbox{ in }\Omega,
\end{equation}
for some $c>0$. To prove this, let $x\in \Omega$ be a fixed point and $r=\frac{\delta(x)}{3}$. Then 
$$
B_{2r}(x)\subset \Omega_{0}= \{ z\in \Omega: \frac{\delta(x)}{3}\leq \delta(z) \leq \frac{5}{3}\delta(x) \} \subset \Omega
$$
and by Lemma \ref{l2} we have
\begin{equation}\label{eq5}
\begin{aligned}
|\nabla w(x)|\leq c\Big[r||\Delta w||_{L^{\infty}(\Omega_{0})}+ \frac{1}{r}||w||_{L^{\infty}(\Omega_{0})}\Big]\leq c\delta(x)^{1-a}
\end{aligned}
\end{equation}
which proves \eq{eq4}. Using the estimate \eq{eq4} we deduce that $|\nabla w|\in L^{\frac{p}{m-1}}(\Omega)$ whenever $\delta(x)^{1-a}\in L^{\frac{p}{m-1}}(\Omega)$ and by Lemma \ref{l3} this is equivalent to $p< \frac{m-1}{a-1}$. Using Lemma \ref{l1} with $\Phi= \nabla w$ we conclude the proof.
\item[{\rm (ii)}] Assume now $\theta(x)\sim \delta(x)^{-1}\log^{-a}\Big(\frac{1}{\delta(x)}\Big)$ for some $a\in (0, 1)$. Then 
\begin{equation*}
\underline{w}(x)= \frac{1}{c}\phi(x)\log^{1-a}\Big(\frac{A}{\phi(x)}\Big) \quad\mbox{ and } \quad \overline{w}(x)= c\phi(x) \log^{1-a}\Big(\frac{A}{\phi(x)}\Big)
\end{equation*}
are respectively sub and supersolutions of \eq{eq2}, where $A>1$ is large. It follows that
\begin{equation}\label{eq6}
w(x)\sim \delta(x)\log^{1-a}\Big(\frac{A}{\delta(x)}\Big). 
\end{equation}
Using \eq{eq6} and a similar approach as in part(i) we deduce that 
$$
|\nabla w(x)|\leq c\log^{1-a}\Big(\frac{A}{\delta(x)}\Big) \quad\mbox{ in }\Omega,
$$
where $A> 1+{ \rm diam}(\Omega)$. In particular $|\nabla w|\in L^{p}(\Omega)$ for all $p>1$ which, by Lemma \ref{l1} with $\Phi= \nabla w$, yields $u\in W_{0}^{1, p}(\Omega)$ for all $p\in [m, \infty)$. This finishes the proof of Proposition \ref{p1}.
\end{itemize}
\end{proof}

\begin{remark}
The regularity of solution $u$ in Proposition \ref{p1} is optimal. In order to see this, let $(\varphi, \lambda)$ denote the first eigenfunction and eigenvalue of $-\Delta_{m}$ in $\Omega$, that is
\begin{equation}\label{eq7}
\left\{
\begin{aligned}
-\Delta_{m} \varphi&= \lambda |\nabla \varphi|^{m-2}\varphi &&\quad\mbox{ in }\Omega,\\
\varphi&= 0 &&\quad\mbox{ on }\partial{\Omega}.
\end{aligned}
\right.
\end{equation}
It is well known that $\lambda>0$, $\varphi \in C^{1, \gamma}(\overline{\Omega})$ for some $\gamma \in (0, 1)$ and $\varphi$ has constant sign in $\Omega$. Also $\varphi(x) \sim \delta(x)$. Thus, by normalizing $\varphi$ we may assume $\varphi> 0$ in $\Omega$ and $||\varphi||_{\infty}= 1$. To show that the $W_{0}^{1, p}(\Omega)$ regularity in Proposition \ref{p1}(i) is optimal, let $\theta(x)= -\Delta_{m}(\varphi^{\frac{m-a}{m-1}})$. Some straightforward calculations yield 
$$
\theta(x)\sim \varphi(x)^{-a}\sim \delta(x)^{-a}.
$$
Thus, $w= \varphi^{\frac{m-a}{m-1}}$ is a solution of \eq{eq1} with 
$$\theta(x)= -\Delta_{m}(\varphi^{\frac{m-a}{m-1}}).
$$
 Clearly $w\in W_{0}^{1, p}(\Omega)$  for all $m\leq p< \frac{m-1}{a-1}$. By Lemma \ref{l3}, one has $w\not\in W_{0}^{1, \frac{m-1}{a-1}}(\Omega)$.

\medskip

Similarly, to show that the regularity $w\in W_{0}^{1, p}(\Omega)$, $m\leq p< \infty$ is optimal we take 
$$
\theta(x)= -\Delta_{m}\Big(\varphi \log^{1-a}\Big(\frac{A}{\varphi}\Big)\Big),
$$
where $A>1$ is a large constant. 
\end{remark}

\begin{proof}[Proof of Proposition \ref{tm1}]
The existence of a solution $u\in W_{0}^{1, m}(\Omega)$ follows from {\rm \cite[Theorem 3.2]{GST2015}}.
\begin{itemize}
\item[{\rm (i)}] If $p+q<1$, then by {\rm \cite[Theorem 2.1]{GST2015}} we have $u\in C^{1, \alpha}(\overline{\Omega})$, for some $\alpha \in (0, 1)$.
\item[{\rm (ii)}] If $p+q=1$, then by {\rm \cite[Theorem 2.1]{GST2015}} we have $u\in W_{0}^{1, m}(\Omega)\cap C^{0, \beta}(\overline{\Omega})$ for some $\beta \in (0, 1)$. Also, the behaviour $u\sim \delta(x)\log^{\frac{1}{m+p-1}}\Big(\frac{1}{\delta(x)}\Big)$ follows in the same way as in {\rm \cite[Lemma 3.3]{GMS2012}} by noting that
\begin{equation*}
\underline{u}(x)= \frac{1}{c}\varphi(x)\log^{1-a}\Big(\frac{A}{\varphi(x)}\Big) \quad\mbox{ and } \quad \overline{u}(x)= c\varphi(x) \log^{1-a}\Big(\frac{A}{\varphi(x)}\Big)
\end{equation*}
are respectively sub and supersolutions of \eq{eq8a} for some large $c>1$. Using the asymptotic behaviour of $u$ we deduce that 
$$
\theta(x):= K(x)u^{-p}(x)\sim \delta(x)^{-1}\log^{-\frac{p}{m+p-1}}\Big(\frac{1}{\delta(x)}\Big).
$$
By Proposition \ref{p1}(ii) it follows that $u\in W_{0}^{1, \tau}(\Omega)$ for all $\tau \in [m, \infty)$.
\item[{\rm (iii)}] If $p+q>1$, then by {\rm \cite[Theorem 2.1]{GST2015}}, we have $u\in W_{0}^{1, m}(\Omega)\cap C^{0, \beta}(\overline{\Omega})$ for some $\beta \in (0, 1)$. Using the fact that
\begin{equation*}
\underline{u}(x)= \frac{1}{c}\varphi(x)^{\frac{m-q}{m+p-1}} \quad\mbox{ and } \quad\overline{u}(x)= c\varphi(x)^{\frac{m-q}{m+p-1}}
\end{equation*}
are respectively  sub and supersolutions of \eq{eq8a} for some large $c>1$, we easily deduce that 
$$
u\sim \delta(x)^{\frac{m-q}{m+p-1}}.
$$  
Then
$$
\theta(x):= K(x)u^{-p}(x)\sim \delta(x)^{-\frac{mp+(m-1)q}{m+p-1}},
$$
and note that $a= \frac{mp+(m-1)q}{m+p-1}\in \Big(1, 2-\frac{1}{m}\Big)$. By Proposition \ref{p1}(ii) it follows that $u\in W_{0}^{1, \tau}(\Omega)$ for all $\tau \in \Big[m, \frac{m+p-1}{p+q-1}\Big)$.
\end{itemize}
\end{proof}

\section{Proof of Theorem \ref{mr}}
\begin{itemize}
\item[{\rm (i)}] Assume $s-r>1$ and $p+\frac{q(m+r)}{m+s-1}<1$. By Proposition \ref{tm1} ${\rm (i)}$ and ${\rm (iii)}$ there exist $0<c<d<1$ such that:

\medskip

Any subsolution $\underline{v}$ and any supersolution $\overline{v}$ of
\begin{equation}\label{mr1}
\left\{
\begin{aligned}
-\Delta_{m}v&= \delta(x)^{r}v^{-s},\;\; v>0 &&\quad\mbox{ in }\Omega,\\
v&= 0 &&\quad\mbox{ on }\partial{\Omega},
\end{aligned}
\right.
\end{equation}
satisfies
$$
\underline{v}\leq c\delta(x)^{\frac{m+r}{m+s-1}} \quad\mbox{ and } \quad{} \overline{v}\geq d\delta(x)^{\frac{m+r}{m+s-1}} \quad\mbox{ in }\Omega.
$$

\medskip

Any subsolution $\underline{u}$ and any supersolution $\overline{u}$ of
\begin{equation}\label{mr2}
\left\{
\begin{aligned}
-\Delta_{m}u&= \delta(x)^{-\frac{q(m+r)}{m+s-1}}u^{-p},\;\; u>0 &&\quad\mbox{ in }\Omega,\\
u&= 0 &&\quad\mbox{ on }\partial{\Omega},
\end{aligned}
\right.
\end{equation}
satisfies
$$
\underline{u}\leq c\delta(x) \quad\mbox{ and } \quad{} \overline{u}\geq d\delta(x) \quad\mbox{ in }\Omega.
$$

Define 
$$
{\cal A}= \Big\{(u, v) \in C(\overline{\Omega}) \times C(\overline{\Omega}):  \begin{array}{cc} c_{1}\delta(x)\leq u(x) \leq c_{2}\delta(x) &\quad\mbox{ in } \Omega \\ m_{1}\delta(x)^{\frac{m+r}{m+s-1}} \leq v(x) \leq m_{2}\delta(x)^{\frac{m+r}{m+s-1}} &\quad\mbox{ in } \Omega \end{array} \Big\}
$$
where $0< c_{1}< 1< c_{2}$ and $0< m_{1}< 1< m_{2}$ satisfy
\begin{equation}\label{mr3}
d\geq c_{1}m_{2}^{\frac{q}{1+p}}, \quad\mbox{ } c\leq c_{2}m_{1}^{\frac{q}{1+p}},\quad\mbox{ }cc_{2}^{\frac{r}{1+s}}\leq m_{2}, \quad\mbox{ } dc_{1}^{\frac{r}{1+s}}\geq m_{1},
\end{equation}
that is
$$
c_{1}c_{2}^{\frac{qr}{(1+s)(1+p)}}\leq dc^{\frac{-q}{1+p}}\leq cd^{\frac{-q}{1+p}}\leq c_{2}c_{1}^{\frac{qr}{(1+s)(1+p)}}.
$$

For any $(u, v)\in {\cal A}$ let $(Tu, Tv)$ be the unique solution of the system
\begin{equation}\label{mr4}
\left\{
\begin{aligned}
-\Delta_{m} (Tu)&=(Tu)^{-p}v^{-q},Tu>0 &&\quad\mbox{ in } \Omega,\\
-\Delta_{m} (Tv)&=(Tv)^{-s}u^{r}, Tv>0 &&\quad\mbox{ in } \Omega,\\
Tu&=Tv=0 &&\quad\mbox{ on }\partial{\Omega},
\end{aligned}
\right.
\end{equation}
and define 
$$
{\cal F}: {\cal A}\rightarrow C(\overline{\Omega}) \times C(\overline{\Omega}) \quad\mbox{ as } \quad{} {\cal F}(u, v)= (Tu, Tv) \quad\mbox{ for any }(u, v)\in {\cal A}.
$$
In order to prove the existence of a solution to system \eq{sys} we need to show that ${\cal F}$ has a fixed point in ${\cal A}$. We claim that:
\begin{enumerate}
\item[{\rm (a)}] ${\cal F}({\cal A})\subseteq {\cal A}$,
\item[{\rm (b)}] ${\cal F}$ is compact and continuous.
\end{enumerate}

\medskip

Then, by Schauder's fixed point theorem we obtain that ${\cal F}$ has a fixed point in ${\cal A}$ which is further a solution to the system \eq{sys}. 

\medskip

\noindent {\it Step 1: ${\cal F}({\cal A})\subseteq {\cal A}$. } 
Let $(u, v) \in {\cal A}$. From the definition of ${\cal A}$ we have 
$$
v\leq m_{2}\delta(x)^{\frac{m+r}{m+s-1}} \quad\mbox{ in } \Omega.
$$ 
Then $Tu$ satisfies
\begin{equation}\label{mr5}
\left\{
\begin{aligned}
-\Delta_{m}(Tu)&\geq m_{2}^{-q}\delta(x)^{-\frac{q(m+r)}{m+s-1}}(Tu)^{-p},  Tu>0 &&\quad\mbox{ in }\Omega,\\
Tu&=0 &&\quad\mbox{ on }\partial{\Omega}.
\end{aligned}
\right.
\end{equation}
Therefore, $\overline{u}= m_{2}^{\frac{q}{1+p}}Tu$ is a supersolution of \eq{mr2}, which by \eq{mr3} yields
$$
Tu= m_{2}^{-\frac{q}{1+p}}\overline{u}\geq m_{2}^{-\frac{q}{1+p}}d\delta(x)\geq c_{1}\delta(x) \quad\mbox{ in } \Omega.
$$
Also, by the definition of ${\cal A}$
$$
v\geq m_{1}\delta(x)^{\frac{m+r}{m+s-1}} \quad\mbox{ in } \Omega.
$$ 
Thus, by the definition of $Tu$ we deduce that
\begin{equation}\label{mr6}
\left\{
\begin{aligned}
-\Delta_{m}(Tu)&\leq m_{1}^{-q}\delta(x)^{-\frac{q(m+r)}{m+s-1}}(Tu)^{-p},  Tu>0 &&\quad\mbox{ in }\Omega,\\
Tu&=0 &&\quad\mbox{ on }\partial{\Omega}.
\end{aligned}
\right.
\end{equation}
Therefore, $\underline{u}= m_{1}^{\frac{q}{1+p}}Tu$ is a subsolution of \eq{mr2}, which by \eq{mr3} yields
$$
Tu= m_{1}^{-\frac{q}{1+p}}\underline{u}\leq m_{1}^{-\frac{q}{1+p}}c\delta(x)\leq c_{2}\delta(x) \quad\mbox{ in } \Omega.
$$
In a similar manner, by using the definition of ${\cal A}$ and the properties of subsolution and supersolution of problem \eq{mr1}, we obtain that $Tv$ satisfies
\begin{equation*}
m_{1}\delta(x)^{\frac{m+r}{m+s-1}}\leq Tv \leq m_{2}\delta(x)^{\frac{m+r}{m+s-1}} \quad\mbox{ in } \Omega.
\end{equation*}
Hence, $(Tu, Tv) \in {\cal A}$ for all $(u, v) \in {\cal A}$, that is, ${\cal F}({\cal A})\subseteq {\cal A}$.

\medskip

\noindent {\it Step 2: ${\cal F}$ is compact and continuous.} Let $(u, v) \in {\cal A}$. Since ${\cal F}(u, v)= (Tu, Tv) \in {\cal A}$, using Proposition \ref{tm1}, we deduce that
$$
Tu \in C^{1, \alpha}(\overline{\Omega}) \quad\mbox{ and } Tv \in C^{0, \alpha}(\overline{\Omega}),
$$
for some $\alpha \in (0, 1)$. Since the embedding $C^{1, \alpha}(\overline{\Omega}) \hookrightarrow C^{0, \alpha}(\overline{\Omega}) \hookrightarrow C(\overline{\Omega})$ is compact, it follows that ${\cal F}$ is compact.

\medskip

In order to prove that ${\cal F}$ is continuous, let $\{(u_{n}, v_{n})\}\subset {\cal A}$ be such that $u_{n}\rightarrow u$ and $v_{n}\rightarrow v$ in $C(\overline{\Omega})$ as $n \rightarrow \infty$. Since ${\cal F}$ is compact, there exist $(U, V)\in {\cal A}$ such that up to a subsequence we have
$$
Tu_{n}\rightarrow U, \;\;Tv_{n}\rightarrow V \quad\mbox{ in }C(\overline{\Omega}) \quad\mbox{ as }n \rightarrow \infty.
$$
By Theorem 1 in \cite{T1984}, the sequences $\{Tu_{n}\}$ and $\{Tv_{n}\}$ are bounded in $C^{1, \gamma}(\overline{\Omega'})$ 
for any smooth open set $\Omega' \subset \subset \Omega$ with $\gamma \in (0, 1)$. Thus, we have
$$
Tu_{n}\rightarrow U, \;\;Tv_{n}\rightarrow V \quad\mbox{ in }C^{1}(\overline{\Omega'}) \quad\mbox{ as }n \rightarrow \infty,
$$
for any smooth open set $\Omega' \subset \subset \Omega$. Passing to the limit in the definition of $Tu_{n}$ and $Tv_{n}$ we deduce that $(U, V)$ satisfies
\begin{equation}\label{mr7}
\left\{
\begin{aligned}
-\Delta_{m} U&= U^{-p}v^{-q},U>0 &&\quad\mbox{ in } \Omega,\\
-\Delta_{m} V&= V^{-s}u^{r}, V>0 &&\quad\mbox{ in } \Omega,\\
U&= V=0 &&\quad\mbox{ on }\partial{\Omega}.
\end{aligned}
\right.
\end{equation}
Since the solution to \eq{mr4} is unique, it follows that $Tu= U$ and $Tv= V$. Therefore, we deduce that
$$
Tu_{n}\rightarrow Tu, \;\;Tv_{n}\rightarrow Tv \quad\mbox{ in }C(\overline{\Omega}) \quad\mbox{ as }n \rightarrow \infty.
$$
Hence ${\cal F}$ is continuous.

\medskip

Here, we can apply the Schauder's fixed point theorem, there exists $(u, v)\in {\cal A}$ such that ${\cal F}(u, v)= (u, v)$, that is $Tu= u$ and $Tv= v$. Therefore, $(u, v)$ is a solution to the system \eq{sys}.

\medskip

The remaining part of Theorem \ref{mr} is proved in the similar manner. We will only provide the necessary changes in order to carry out the proofs of ${\rm (ii)}$-${\rm (viii)}$. 

\bigskip

\item[{\rm (ii)}] Assume $s-r>1$ and $p+\frac{q(m+r)}{m+s-1}=1$. Choose $\varepsilon> 0$ small enough such that   
$$
s\Big(1-\frac{1}{m}\Big)-r(1-\varepsilon)< 2-\frac{1}{m}.
$$
By Proposition \ref{tm1} ${\rm (i)}$-${\rm (iii)}$ there exist $0<c<d<1$ such that:

\medskip

Any subsolution $\underline{v}$ and any supersolution $\overline{v}$ of
\begin{equation}\label{mr8}
\left\{
\begin{aligned}
-\Delta_{m}v&= \delta(x)^{r}v^{-s},\;\; v>0 &&\quad\mbox{ in }\Omega,\\
v&= 0 &&\quad\mbox{ on }\partial{\Omega},
\end{aligned}
\right.
\end{equation}
satisfies
$$
\underline{v}\leq c\delta(x)^{\frac{m+r}{m+s-1}} \quad\mbox{ and }\quad{} \overline{v}\geq d\delta(x)^{\frac{m+r}{m+s-1}}  \quad\mbox{ in }\Omega.
$$

\medskip

Any subsolution $\underline{u}$ and any supersolution $\overline{u}$ of
\begin{equation}\label{mr9}
\left\{
\begin{aligned}
-\Delta_{m}u&= \delta(x)^{-\frac{q(m+r)}{m+s-1}}u^{-p},\;\; u>0 &&\quad\mbox{ in }\Omega,\\
u&= 0 &&\quad\mbox{ on }\partial{\Omega},
\end{aligned}
\right.
\end{equation}
satisfies
$$
\underline{u}\leq c'\delta(x)\log^{\frac{1}{m+p-1}}\Big(\frac{1}{\delta(x)}\Big)\leq c\delta(x)^{1- \varepsilon} \quad\mbox{ in }\Omega 
$$
and 
$$
\overline{u}\geq d'\delta(x)\log^{\frac{1}{m+p-1}}\Big(\frac{1}{\delta(x)}\Big)\geq d\delta(x) \quad\mbox{ in }\Omega.
$$

Define 
$$
{\cal A}= \Big\{(u, v) \in C(\overline{\Omega}) \times C(\overline{\Omega}):  \begin{array}{cc} c_{1}\delta(x)\leq u(x) \leq c_{2}\delta(x)^{1- \varepsilon} &\quad\mbox{ in } \Omega \\ m_{1}\delta(x)^{\frac{m+r}{m+s-1}} \leq v(x) \leq m_{2}\delta(x)^{\frac{m+r}{m+s-1}} &\quad\mbox{ in } \Omega \end{array} \Big\}
$$
where $0< c_{1}< 1< c_{2}$ and $0< m_{1}< 1< m_{2}$ satisfy \eq{mr3}. The approach is now similar as in part ${\rm (i)}$.

\bigskip

\item[{\rm (iii)}] Assume $s-r<1$ and $p+q<1$. By Proposition \ref{tm1} ${\rm (i)}$-${\rm (iii)}$ there exist $0<c<d<1$ such that:

\medskip

Any subsolution $\underline{v}$ and any supersolution $\overline{v}$ of
\begin{equation}\label{mr15}
\left\{
\begin{aligned}
-\Delta_{m}v&= \delta(x)^{r}v^{-s},\;\; v>0 &&\quad\mbox{ in }\Omega,\\
v&= 0 &&\quad\mbox{ on }\partial{\Omega},
\end{aligned}
\right.
\end{equation}
satisfies
$$
\underline{v}\leq c\delta(x) \quad\mbox{ and } \quad{} \overline{v}\geq d\delta(x) \quad\mbox{ in }\Omega.
$$

\medskip

Any subsolution $\underline{u}$ and any supersolution $\overline{u}$ of
\begin{equation}\label{mr16}
\left\{
\begin{aligned}
-\Delta_{m}u&= \delta(x)^{-q}u^{-p},\;\; u>0 &&\quad\mbox{ in }\Omega,\\
u&= 0 &&\quad\mbox{ on }\partial{\Omega},
\end{aligned}
\right.
\end{equation}
satisfies
$$
\underline{u}\leq c\delta(x) \quad\mbox{ and } \quad{} \overline{u}\geq d\delta(x) \quad\mbox{ in }\Omega.
$$

Define 
$$
{\cal A}= \Big\{(u, v) \in C(\overline{\Omega}) \times C(\overline{\Omega}):  \begin{array}{cc} c_{1}\delta(x)\leq u(x) \leq c_{2}\delta(x) &\quad\mbox{ in } \Omega \\ m_{1}\delta(x) \leq v(x) \leq m_{2}\delta(x) &\quad\mbox{ in } \Omega \end{array} \Big\}
$$
where $0< c_{1}< 1< c_{2}$ and $0< m_{1}< 1< m_{2}$ satisfy \eq{mr3}. The approach is now similar as in part ${\rm (i)}$.

\bigskip

\item[{\rm (iv)}] Assume $s-r<1$ and $p+q=1$. Choose $\varepsilon> 0$ small enough such that   
$$
s\Big(1-\frac{1}{m}\Big)-r(1-\varepsilon)< 2-\frac{1}{m}.
$$
By Proposition \ref{tm1} ${\rm (i)}$-${\rm (iii)}$ there exist $0<c<d<1$ such that:

\medskip

Any subsolution $\underline{v}$ and any supersolution $\overline{v}$ of
\begin{equation}\label{mr22}
\left\{
\begin{aligned}
-\Delta_{m}v&= \delta(x)^{r}v^{-s},\;\; v>0 &&\quad\mbox{ in }\Omega,\\
v&= 0 &&\quad\mbox{ on }\partial{\Omega},
\end{aligned}
\right.
\end{equation}
satisfies
$$
\underline{v}\leq c\delta(x) \quad\mbox{ and } \quad{} \overline{v}\geq d\delta(x) \quad\mbox{ in }\Omega.
$$

\medskip

Any subsolution $\underline{u}$ and any supersolution $\overline{u}$ of
\begin{equation}\label{mr23}
\left\{
\begin{aligned}
-\Delta_{m}u&= \delta(x)^{-q}u^{-p},\;\; u>0 &&\quad\mbox{ in }\Omega,\\
u&= 0 &&\quad\mbox{ on }\partial{\Omega},
\end{aligned}
\right.
\end{equation}
satisfies
$$
\underline{u}\leq c'\delta(x)\log^{\frac{1}{m+p-1}}\Big(\frac{1}{\delta(x)}\Big)\leq c\delta(x)^{1-\varepsilon} \quad\mbox{ in }\Omega
$$
and 
$$
\overline{u}\geq d'\delta(x)log^{\frac{1}{m+p-1}}\Big(\frac{1}{\delta(x)}\Big)\geq d\delta(x) \quad\mbox{ in }\Omega.
$$

Define 
$$
{\cal A}= \Big\{(u, v) \in C(\overline{\Omega}) \times C(\overline{\Omega}):  \begin{array}{cc} c_{1}\delta(x)\leq u(x) \leq c_{2}\delta(x)^{1-\varepsilon} &\quad\mbox{ in } \Omega \\ m_{1}\delta(x) \leq v(x) \leq m_{2}\delta(x) &\quad\mbox{ in } \Omega \end{array} \Big\}
$$
where $0< c_{1}< 1< c_{2}$ and $0< m_{1}< 1< m_{2}$ satisfy \eq{mr3}. The approach is now similar as in part ${\rm (i)}$.

\bigskip

\item[{\rm (v)}] Assume $s-r=1$ and $p+q<1$. Choose $\varepsilon> 0$ small enough such that
$$
p\Big(1-\frac{1}{m}\Big)+q(1-\varepsilon)< 2-\frac{1}{m}.
$$
By Proposition \ref{tm1} ${\rm (i)}$-${\rm (iii)}$ there exist $0<c<d<1$ such that:

\medskip

Any subsolution $\underline{v}$ and any supersolution $\overline{v}$ of
\begin{equation}\label{mr29}
\left\{
\begin{aligned}
-\Delta_{m}v&= \delta(x)^{r}v^{-s},\;\; v>0 &&\quad\mbox{ in }\Omega,\\
v&= 0 &&\quad\mbox{ on }\partial{\Omega},
\end{aligned}
\right.
\end{equation}
satisfies
$$
\underline{v}\leq  c'\delta(x)\log^{\frac{1}{m+s-1}}\Big(\frac{1}{\delta(x)}\Big)\leq c\delta(x)^{1-\varepsilon} \quad\mbox{ in }\Omega
$$
and
$$
\overline{v}\geq  d'\delta(x)\log^{\frac{1}{m+s-1}}\Big(\frac{1}{\delta(x)}\Big)\geq d\delta(x) \quad\mbox{ in }\Omega.
$$

\medskip

Any subsolution $\underline{u}$ and any supersolution $\overline{u}$ of
\begin{equation}\label{mr30}
\left\{
\begin{aligned}
-\Delta_{m}u&= \delta(x)^{-q(1-\varepsilon)}u^{-p},\;\; u>0 &&\quad\mbox{ in }\Omega,\\
u&= 0 &&\quad\mbox{ on }\partial{\Omega},
\end{aligned}
\right.
\end{equation}
satisfies
$$
\underline{u}\leq c\delta(x) \quad\mbox{ and } \quad{} \overline{u}\geq d\delta(x) \quad\mbox{ in }\Omega.
$$

Define 
$$
{\cal A}= \Big\{(u, v) \in C(\overline{\Omega}) \times C(\overline{\Omega}):  \begin{array}{cc} c_{1}\delta(x)\leq u(x) \leq c_{2}\delta(x) &\quad\mbox{ in } \Omega \\ m_{1}\delta(x) \leq v(x) \leq m_{2}\delta(x)^{1-\varepsilon} &\quad\mbox{ in } \Omega \end{array} \Big\}
$$
where $0< c_{1}< 1< c_{2}$ and $0< m_{1}< 1< m_{2}$ satisfy \eq{mr3}. The approach is now similar as in part ${\rm (i)}$.

\bigskip

\item[{\rm (vi)}] Assume $s-r=1$ and $p+q=1$. Choose $\varepsilon, \eta> 0$ small enough such that
$$
p\Big(1-\frac{1}{m}\Big)+q(1-\varepsilon)< 2-\frac{1}{m}
$$
and
$$
s\Big(1-\frac{1}{m}\Big)-r(1-\eta)< 2-\frac{1}{m}.
$$
By Proposition \ref{tm1} ${\rm (i)}$-${\rm (iii)}$ there exist $0<c<d<1$ such that:

\medskip

Any subsolution $\underline{v}$ and any supersolution $\overline{v}$ of
\begin{equation}\label{mr36}
\left\{
\begin{aligned}
-\Delta_{m}v&= \delta(x)^{r}v^{-s},\;\; v>0 &&\quad\mbox{ in }\Omega,\\
v&= 0 &&\quad\mbox{ on }\partial{\Omega},
\end{aligned}
\right.
\end{equation}
satisfies
$$
\underline{v}\leq  c'\delta(x)\log^{\frac{1}{m+s-1}}\Big(\frac{1}{\delta(x)}\Big)\leq c\delta(x)^{1-\varepsilon}  \quad\mbox{ in }\Omega
$$
and 
$$
\overline{v}\geq  d'\delta(x)\log^{\frac{1}{m+s-1}}\Big(\frac{1}{\delta(x)}\Big)\geq d\delta(x) \quad\mbox{ in }\Omega.
$$

\medskip

Any subsolution $\underline{u}$ and any supersolution $\overline{u}$ of
\begin{equation}\label{mr37}
\left\{
\begin{aligned}
-\Delta_{m}u&= \delta(x)^{-q(1-\varepsilon)}u^{-p},\;\; u>0 &&\quad\mbox{ in }\Omega,\\
u&= 0 &&\quad\mbox{ on }\partial{\Omega},
\end{aligned}
\right.
\end{equation}
satisfies
$$
\underline{u}\leq  c'\delta(x)\log^{\frac{1}{m+p-1}}\Big(\frac{1}{\delta(x)}\Big)\leq c\delta(x)^{1-\eta} \quad\mbox{ in }\Omega
$$
and 
$$
\overline{u}\geq  d'\delta(x)\log^{\frac{1}{m+p-1}}\Big(\frac{1}{\delta(x)}\Big)\geq d\delta(x) \quad\mbox{ in }\Omega.
$$

Define 
$$
{\cal A}= \Big\{(u, v) \in C(\overline{\Omega}) \times C(\overline{\Omega}):  \begin{array}{cc} c_{1}\delta(x)\leq u(x) \leq c_{2}\delta(x)^{1-\eta} &\quad\mbox{ in } \Omega \\ m_{1}\delta(x) \leq v(x) \leq m_{2}\delta(x)^{1-\varepsilon} &\quad\mbox{ in } \Omega \end{array} \Big\}
$$
where $0< c_{1}< 1< c_{2}$ and $0< m_{1}< 1< m_{2}$ satisfy \eq{mr3}. The approach is now similar as in part ${\rm (i)}$.

\bigskip

\item[{\rm (vii)}] Assume $p+q>1$ and $s-\frac{r(m-q)}{m+p-1}<1$. By Proposition \ref{tm1} ${\rm (i)}$-${\rm (iii)}$ there exist $0<c<d<1$ such that:

\medskip

Any subsolution $\underline{u}$ and any supersolution $\overline{u}$ of
\begin{equation}\label{mr43}
\left\{
\begin{aligned}
-\Delta_{m}u&= \delta(x)^{-q}u^{-p},\;\; u>0 &&\quad\mbox{ in }\Omega,\\
u&= 0 &&\quad\mbox{ on }\partial{\Omega},
\end{aligned}
\right.
\end{equation}
satisfies
$$
\underline{u}\leq c\delta(x)^{\frac{m-q}{m+p-1}} \quad\mbox{ and } \quad{} \overline{u}\geq d\delta(x)^{\frac{m-q}{m+p-1}}  \quad\mbox{ in }\Omega.
$$

\medskip

Any subsolution $\underline{v}$ and any supersolution $\overline{v}$ of
\begin{equation}\label{mr44}
\left\{
\begin{aligned}
-\Delta_{m}v&= \delta(x)^{\frac{r(m-q)}{m+p-1}}v^{-s},\;\; v>0 &&\quad\mbox{ in }\Omega,\\
v&= 0 &&\quad\mbox{ on }\partial{\Omega},
\end{aligned}
\right.
\end{equation}
satisfies
$$
\underline{v}\leq c\delta(x) \quad\mbox{ and } \quad{} \overline{v}\geq d\delta(x) \quad\mbox{ in }\Omega.
$$

Define 
$$
{\cal A}= \Big\{(u, v) \in C(\overline{\Omega}) \times C(\overline{\Omega}):  \begin{array}{cc} c_{1}\delta(x)^{\frac{m-q}{m+p-1}}\leq u(x) \leq c_{2}\delta(x)^{\frac{m-q}{m+p-1}} &\quad\mbox{ in } \Omega \\ m_{1}\delta(x)\leq v(x) \leq m_{2}\delta(x) &\quad\mbox{ in } \Omega \end{array} \Big\}
$$
where $0< c_{1}< 1< c_{2}$ and $0< m_{1}< 1< m_{2}$ satisfy \eq{mr3}. The approach is now similar as in part ${\rm (i)}$.

\bigskip

\item[{\rm (viii)}] Assume $p+q>1$ and $s-\frac{r(m-q)}{m+p-1}=1$. Choose $\varepsilon> 0$ such that
$$
p\Big(1-\frac{1}{m}\Big)+q(1-\varepsilon)< 2-\frac{1}{m}.
$$
By Proposition \ref{tm1} ${\rm (i)}$-${\rm (iii)}$ there exist $0<c<d<1$ such that:

\medskip

Any subsolution $\underline{u}$ and any supersolution $\overline{u}$ of
\begin{equation}\label{mr50}
\left\{
\begin{aligned}
-\Delta_{m}u&= \delta(x)^{-q}u^{-p},\;\; u>0 &&\quad\mbox{ in }\Omega,\\
u&= 0 &&\quad\mbox{ on }\partial{\Omega},
\end{aligned}
\right.
\end{equation}
satisfies
$$
\underline{u}\leq c\delta(x)^{\frac{m-q}{m+p-1}} \quad\mbox{ and } \quad{} \overline{u}\geq d\delta(x)^{\frac{m-q}{m+p-1}} \quad\mbox{ in }\Omega.
$$

\medskip

Any subsolution $\underline{v}$ and any supersolution $\overline{v}$ of
\begin{equation}\label{mr51}
\left\{
\begin{aligned}
-\Delta_{m}v&= \delta(x)^{\frac{r(m-q)}{m+p-1}}v^{-s},\;\; v>0 &&\quad\mbox{ in }\Omega,\\
v&= 0 &&\quad\mbox{ on }\partial{\Omega},
\end{aligned}
\right.
\end{equation}
satisfies
$$
\underline{v}\leq c'\delta(x)\log^{\frac{1}{m+s-1}}\Big(\frac{1}{\delta(x)}\Big)\leq c\delta(x)^{1-\varepsilon}  \quad\mbox{ in }\Omega
$$
and
$$
\overline{v}\geq d'\delta(x)\log^{\frac{1}{m+s-1}}\Big(\frac{1}{\delta(x)}\Big)\geq d\delta(x) \quad\mbox{ in }\Omega.
$$

Define 
$$
{\cal A}= \Big\{(u, v) \in C(\overline{\Omega}) \times C(\overline{\Omega}):  \begin{array}{cc} c_{1}\delta(x)^{\frac{m-q}{m+p-1}}\leq u(x) \leq c_{2}\delta(x)^{\frac{m-q}{m+p-1}} &\quad\mbox{ in } \Omega \\ m_{1}\delta(x)\leq v(x) \leq m_{2}\delta(x)^{1-\varepsilon} &\quad\mbox{ in } \Omega \end{array} \Big\}
$$
where $0< c_{1}< 1< c_{2}$ and $0< m_{1}< 1< m_{2}$ satisfy \eq{mr3}. The approach is now similar as in part ${\rm (i)}$.

\end{itemize}

\medskip

\section{ Proof of Theorem \ref{mru} }

\begin{itemize}
\item[{\rm (i)}] Assume $s-r>1$ and $p+\frac{q(m+r)}{m+s-1}< 1$. Let $(u_{1}, v_{1})$ and $(u_{2}, v_{2})$ be two solutions of system \eq{sys}. By Proposition \ref{uni}, there exists a constant $c_{1}> 0$ such that
\begin{equation}\label{uni1}
u_{i}(x), \;\; v_{i}(x)\geq c_{1}\delta(x) \quad\mbox{ in }\Omega, \mbox{ } i= 1,2.
\end{equation}
Using $u_{i}(x)\geq c_{1}\delta(x)$ in $\Omega$, we deduce that
\begin{equation*}
\left\{
\begin{aligned}
-\Delta_{m}v_{i}&\geq c_{2}\delta(x)^{r}v_{i}^{-s},\;\; u_{i}>0 &&\quad\mbox{ in }\Omega,\\
v_{i}&= 0 &&\quad\mbox{ on }\partial{\Omega},
\end{aligned}
\right.
\end{equation*}
satisfies
\begin{equation}\label{cu1}
v_{i}(x)\geq c_{3}\delta(x)^{\frac{m+r}{m+s-1}} \quad\mbox{ in }\Omega, \mbox{ } i= 1,2.
\end{equation}
Using \eq{cu1} we deduce that $u_{i}$ satisfies 
\begin{equation*}
\left\{
\begin{aligned}
-\Delta_{m}u_{i}&\leq c_{4}\delta(x)^{-\frac{q(m+r)}{m+s-1}}u_{i}^{-p},\;\; u_{i}>0 &&\quad\mbox{ in }\Omega,\\
u_{i}&= 0 &&\quad\mbox{ on }\partial{\Omega},
\end{aligned}
\right.
\end{equation*}
so
\begin{equation}\label{cu2}
u_{i}\leq c_{5}\delta(x) \quad\mbox{ in }\Omega, \mbox{ } i= 1,2.
\end{equation}
By \eq{uni1} and \eq{cu2}, there exists $c\in (0, 1)$ such that
$$
c\delta(x)\leq u_{i}(x)\leq \frac{1}{c}\delta(x), \quad\mbox{ in }\Omega \mbox{ } i= 1,2.
$$
Therefore, we can find a constant $C>1$ such that
$$
Cu_{1}\geq u_{2} \quad\mbox{ and } \quad{} Cu_{2}\geq u_{1} \quad\mbox{ in }\Omega.
$$
We claim that $u_{1}\geq u_{2}$ in $\Omega$. Let us suppose by contradiction that
$$
M:= \inf\{A>1 : Au_{1}\geq u_{2} \mbox{ in }\Omega\}>1.
$$
Clearly $Mu_{1}\geq u_{2}$ in $\Omega$. Therefore, it follows that
$$
-\Delta_{m}v_{2}\leq M^{r}u_{1}^{r}v_{2}^{-s} \quad\mbox{ in }\Omega.
$$
Thus, $v_{1}$ is a solution and $M^{-\frac{r}{1+s}}v_{2}$ is subsolution of
\begin{equation*}
\left\{
\begin{aligned}
-\Delta_{m}z&= u_{1}^{r}z^{-s},\;\; z>0 &&\quad\mbox{ in }\Omega,\\
z&= 0, &&\quad\mbox{ on }\partial{\Omega},
\end{aligned}
\right.
\end{equation*}
which by Proposition \ref{gst} gives 
$$
M^{-\frac{r}{1+s}}v_{2}\leq v_{1} \quad\mbox{ in }\Omega.
$$
Using the above inequality, we have 
\begin{equation*} 
-\Delta_{m}u_{1}\leq M^{\frac{qr}{1+s}}u_{1}^{-p}v_{2}^{-q} \quad\mbox{ in }\Omega.
\end{equation*}
Thus, $u_{2}$ is a solution and $M^{-\frac{qr}{(1+p)(1+s)}}u_{1}$ is subsolution of
\begin{equation*}
\left\{
\begin{aligned}
-\Delta_{m}w&= w^{-p}v_{2}^{-q},\;\; w>0 &&\quad\mbox{ in }\Omega,\\
w&= 0, &&\quad\mbox{ on }\partial{\Omega},
\end{aligned}
\right.
\end{equation*}
which by Proposition \ref{gst} gives 
$$
u_{1}\leq M^{\frac{qr}{(1+p)(1+s)}}u_{2} \quad\mbox{ in }\Omega.
$$
This contradicts the minimality of $M$ as 
$$
\frac{qr}{(1+p)(1+s)}< 1.
$$
Hence $u_{1}\geq u_{2}$ in $\Omega$. Similarly we have $u_{2}\geq u_{1}$ in $\Omega$, so $u_{1}\equiv u_{2}$ which also yields $v_{1}\equiv v_{2}$. Thus, the system \eq{sys} has unique solution.

\medskip

\item[{\rm (ii)}] Assume $s-r\leq 1$ and $p+q<1$.
Let $(u_{1}, v_{1})$ and $(u_{2}, v_{2})$ be two solutions of \eq{sys}. By Proposition \ref{uni}, there exists a constant $c_{1}> 0$ such that
\begin{equation}\label{uni2}
u_{i}(x), \;\; v_{i}(x)\geq c_{1}\delta(x) \quad\mbox{ in }\Omega, \mbox{ } i= 1,2.
\end{equation}
Using $v_{i}(x)\geq c_{1}\delta(x)$ in $\Omega$, we deduce that $u_{i}$ satisfies
\begin{equation*}
\left\{
\begin{aligned}
-\Delta_{m}u_{i}&\leq c_{2}\delta(x)^{-q}u_{i}^{-p},\;\; u_{i}>0 &&\quad\mbox{ in }\Omega,\\
u_{i}&= 0 &&\quad\mbox{ on }\partial{\Omega},
\end{aligned}
\right.
\end{equation*}
so
\begin{equation}\label{cub}
u_{i}(x)\leq c_{3}\delta(x) \quad\mbox{ in }\Omega, \mbox{ } i= 1,2.
\end{equation}
By \eq{uni2} and \eq{cub}, there exists $c\in (0, 1)$ such that
$$
c\delta(x)\leq u_{i}(x)\leq \frac{1}{c}\delta(x) \quad\mbox{ in }\Omega, \mbox{ } i= 1,2.
$$
Further, the approach is similar to that in case ${\rm (i)}$.

\medskip

\item[{\rm (iii)}] Assume $p+q=1$ and $s-r< 1$. Let $\varepsilon>0$ be small enoungh such that $s-r(1-\varepsilon)<1$. Let $(u_{1}, v_{1})$ and $(u_{2}, v_{2})$ be two solutions of \eq{sys}. By Proposition \ref{uni}, there exists a constant $c_{1}> 0$ such that
\begin{equation}\label{unine}
u_{i}(x), \;\; v_{i}(x)\geq c_{1}\delta(x) \quad\mbox{ in }\Omega, \mbox{ } i= 1,2.
\end{equation}
Using $v_{i}(x)\geq c_{1}\delta(x)$ in $\Omega$, we deduce that $u_{i}$ satisfies
\begin{equation*}
\left\{
\begin{aligned}
-\Delta_{m}u_{i}&\leq c_{2}\delta(x)^{-q}u_{i}^{-p},\;\; u_{i}>0 &&\quad\mbox{ in }\Omega,\\
u_{i}&= 0 &&\quad\mbox{ on }\partial{\Omega},
\end{aligned}
\right.
\end{equation*}
so
\begin{equation}\label{unine2}
u_{i}(x)\leq c_{3}\delta(x)\log^{\frac{1}{m+p-1}}\Big(\frac{1}{\delta(x)}\Big)\leq c_{4}\delta(x)^{1-\varepsilon} \quad\mbox{ in }\Omega, \mbox{ } i= 1,2.
\end{equation}
Using \eq{unine2} we deduce that $v_{i}$ satisfies 
\begin{equation*}
\left\{
\begin{aligned}
-\Delta_{m}v_{i}&\leq c_{5}\delta(x)^{r(1-\varepsilon)}v_{i}^{-s},\;\; v_{i}>0 &&\quad\mbox{ in }\Omega,\\
v_{i}&= 0 &&\quad\mbox{ on }\partial{\Omega},
\end{aligned}
\right.
\end{equation*}
so
\begin{equation}\label{unine3}
v_{i}\leq c_{6}\delta(x) \quad\mbox{ in }\Omega, \mbox{ } i= 1,2.
\end{equation}
By \eq{unine} and \eq{unine3}, there exists $c\in (0, 1)$ such that
\begin{equation}\label{unine4}
c\delta(x)\leq v_{i}(x)\leq \frac{1}{c}\delta(x) \quad\mbox{ in }\Omega, \mbox{ } i= 1,2.
\end{equation}
Further, the approach is similar to that in case ${\rm (i)}$.

\medskip

\item[{\rm (iv)}] Assume $p+q>1$ and $s-\frac{r(m-q)}{m+p-1}< 1$. Let $(u_{1}, v_{1})$ and $(u_{2}, v_{2})$ be two solutions of system \eq{sys}. By Proposition \ref{uni}, there exists a constant $c_{1}> 0$ such that
\begin{equation}\label{uni3}
u_{i}(x), \;\; v_{i}(x)\geq c_{1}\delta(x) \quad\mbox{ in }\Omega, \mbox{ } i= 1,2.
\end{equation}
Using $v_{i}(x)\geq c_{1}\delta(x)$ in $\Omega$, we deduce that $u_{i}$ satisfies
\begin{equation*}
\left\{
\begin{aligned}
-\Delta_{m}u_{i}&\leq c_{2}\delta(x)^{-q}u_{i}^{-p},\;\; u_{i}>0 &&\quad\mbox{ in }\Omega,\\
u_{i}&= 0 &&\quad\mbox{ on }\partial{\Omega},
\end{aligned}
\right.
\end{equation*}
so
\begin{equation}\label{cu3}
u_{i}(x)\leq c_{3}\delta(x)^{\frac{m-q}{m+p-1}} \quad\mbox{ in }\Omega,\mbox{ } i= 1,2.
\end{equation}
Using \eq{cu3} we deduce that $v_{i}$ satisfies
\begin{equation*}
\left\{
\begin{aligned}
-\Delta_{m}v_{i}&\leq c_{4}\delta(x)^{\frac{r(m-q)}{m+p-1}}v_{i}^{-s},\;\; v_{i}>0 &&\quad\mbox{ in }\Omega,\\
v_{i}&= 0 &&\quad\mbox{ on }\partial{\Omega},
\end{aligned}
\right.
\end{equation*}
so
\begin{equation}\label{cu4}
v_{i}\leq c_{5}\delta(x) \quad\mbox{ in }\Omega, \mbox{ } i= 1,2.
\end{equation}
By \eq{uni3} and \eq{cu4}, there exists $c\in (0, 1)$ such that
$$
c\delta(x)\leq v_{i}(x)\leq \frac{1}{c}\delta(x), \quad\mbox{ in }\Omega, \mbox{ } i= 1,2.
$$
Therefore, we can find a constant $C>1$ such that
$$
Cv_{1}\geq v_{2} \quad\mbox{ and } \quad{} Cv_{2}\geq v_{1} \quad\mbox{ in }\Omega.
$$
From now on we proceed in the same manner as in case ${\rm (i)}$. This concludes the proof of Theorem \ref{mru}.

\end{itemize}

\noindent{\bf Acknowledgement.} This work is part of the author's PhD thesis and has been carried out with the financial support of the Research Demonstratorship Scheme offered by the School of Mathematics and Statistics, University College Dublin.

\end{document}